\definecolor {processblue}{cmyk}{0.96,0,0,0}
\newtheorem{theorem}{Theorem}[section]
\newtheorem{problem}{Problem}
\newtheorem{lemma}{Lemma}[section]
\newtheorem{corollary}{Corollary}[section]
\theoremstyle{definition}
\newtheorem{definition}{Definition}[section]
\theoremstyle{definition}
\theoremstyle{remark}
\newtheorem{remark}{Remark}[section]
\def\sign{{\rm sign\,}}
\def\MM^d{\ensuremath{\mathfrak M}}
\def\MM{\ensuremath{\mathcal M}}
\def\ZB{\ensuremath{\mathcal B}}
\def\ZA{\ensuremath{\mathcal A}}
\def\zZ{\ensuremath{\mathcal Z}}
\def\ZN{\ensuremath{\mathbb N}}
\def\zI{\ensuremath{\mathcal I}}
\def\ZD{\ensuremath{\cal D}}
\def\ZD{\ensuremath{\mathcal D}}
\numberwithin{equation}{section}
\def\md#1#2\emd{\ifx0#1
	\begin{equation*} #2 \end{equation*}\fi  
	\ifx1#1\begin{equation}#2\end{equation}\fi   
	\ifx2#1\begin{align*}#2\end{align*}\fi   
	\ifx3#1\begin{align}#2\end{align}\fi    
	\ifx4#1\begin{gather*}#2\end{gather*}\fi  
	\ifx5#1\begin{gather}#2\end{gather}\fi   
	\ifx6#1\begin{multline*}#2\end{multline*}\fi  
	\ifx7#1\begin{multline}#2\end{multline}\fi  
	\ifx8#1\begin{multline*}\begin{split}#2\end{split}\end{multline*}\fi
	\ifx9#1\begin{multline}\begin{split}#2\end{split}\end{multline}\fi
}
\newcommand {\e }[1]{\eqref{#1}}
\newcommand {\lem }[1]{Lemma \ref{#1}}
\newcommand {\rem }[1]{Remark \ref{#1}}
\newcommand {\pro }[1]{Proposition \ref{#1}}
\newcommand {\trm }[1]{Theorem \ref{#1}}
\newcommand {\sect }[1]{Section \ref{#1}}
\title[] {Asymptotic estimates for double-coverings}
\author{Grigori A. Karagulyan}
\address{Institute of Mathematics of NAS RA, Marshal Baghramian ave., 24/5, Yerevan, 0019, Armenia} 
\email{g.karagulyan@gmail.com}
\author{Vahe G. Karagulyan}
\address{Faculty of Mathematics and Mechanics, Yerevan State
	University, Alex Manoogian, 1, 0025, Yerevan, Armenia} 
\email{vahekar2000@gmail.com}
\thanks{The work was supported by the Science Committee of RA, in the frames of the research project 21AG‐1A045 }
\subjclass[2010]{05A17, 05A20, 05B40, 42C05, 60G42}
\keywords{double-covering, asymptotic estimate, Rademacher chaos, unconditional convergence}
\begin{document}
\begin{abstract}
A collection of finite sets $\{A_1,A_2,\ldots,A_{p}\}$ is said to be a double-covering if each $a\in \cup_{k=1}^{p}A_k$ is included in exactly two sets of the collection. For fixed integers $l$ and $p$, let $\mu_{l,p}$ be the number of equivalency classes of double-coverings with $\#(A_k)=l$, $k=1,2,\ldots,p$. We characterize the asymptotic behavior of the quantity $\mu_{l,p}$ as $p\to \infty$. The results are applied to give an alternative approach to the Bonami-Kiener \cite{Bon,Kie} hypercontraction  inequality. 
\end{abstract}

	\maketitle  
\section{Introduction}
We will consider a combinatorial object of double-covering, which is partially related to the theory of partitions and the graph theory. The theory of partitions has a rich history that is thoroughly considered in the book by G.~Andrews \cite{And}. A partition of a positive integer $n$ is a finite nonincreasing sequence of positive integers $n_1, n_2, \ldots, n_m$ such that $\sum_{k=1}^m n_k=n$. The partition function $\nu(n)$ is the number of partitions of $n$. One of the crowning achievement of the theory of partition is the precise formula for  $\nu(n)$ mostly completed by G.~Hardy  and S.Ramanujan and fully completed by H.~Rademacher (see \cite{And}, Theorem 5.1). The formula doesn't have a simple form, it is given in the form of an infinite series, where certain trigonometric sums are involved. There is also an asymptotic formula 
\begin{equation}\label{part}
	\nu(n)\sim \frac{1}{4n\sqrt{3}}\exp\left(\pi\left(\frac{2n}{3}\right)^{1/2}\right)
\end{equation}
due to Meinardus (see \cite{And}, Theorem 6.2), which will be used below.

A given family of finite sets $A_k$, $k=1,2,\ldots,p$, of arbitrary nature (not necessarily different) defines a set-collection denoted by $\ZA=\{A_1,A_2,\ldots,A_{p}\}$. 
\begin{definition}
	Two set-collections $\ZA=\{A_1,A_2,\ldots,A_{p}\}$ and $\ZB=\{B_1,B_2,\ldots,B_{p}\}$ is said to be equal if there is a one to one mapping $\sigma$ on $\{1,2,\ldots,p\}$ such that $A_{\sigma(k)}=B_k$ for all $k=1,2,\ldots,p$. We say that the set-collections $\ZA$ and $\ZB$ are isomorphic if there is a one to one mapping 
	\begin{equation*}
		\phi:\bigcup_{k=1}^{p}A_k\to\bigcup_{k=1}^{p}B_k
	\end{equation*}
	such that $\{\phi(A_1),\ldots,\phi(A_{p})\}=\{B_1,\ldots,B_{p}\}$. The relations of equality and being isomorphic for two set-collections $\ZA$ and $\ZB$ will be denoted by $\ZA=\ZB$ and $\ZA\sim \ZB$ respectively.
\end{definition}
Note that the symbol $\sim$ here determines an equivalence relation. Let $[A_1,A_2,\ldots,A_{p}]$ be the equivalence class (E-class) generated by a set-collection $\{A_1,A_2,\ldots,A_{p}\}$. Correspondingly, if $\ZD$ is a family of set-collections, then we denote by $[\ZD]$ the E-classes generated by the elements of $\ZD$.

Given set-collections $\ZA_1,\ZA_2,\ldots,\ZA_n$ determine a new set-collection $\{\ZA_1,\ZA_2,\ldots,\ZA_n\}$, which is a collection of all the elements of the set-collections $\ZA_k$. 
\begin{definition}
	A set-collection $\ZA=\{A_1,A_2,\ldots,A_{p}\}$ is said to be double-covering if each $a\in \cup_{k=1}^{p}A_k$ is included in exactly two sets $A_k$. If each such $a$ is covered by even number of sets $A_k$, then we say $\ZA$ is an even-covering. 
\end{definition}
\begin{definition}
	A set-collection $\ZA$ is said to be connected if for any two different elements $A, B\in \ZA$ there is a sequence of sets $A=E_0, E_1,\ldots,E_m=B$
	such that $E_j\in \ZA$, $E_{j}\cap E_{j+1}\neq\varnothing $ for any $j=0,1\ldots, m-1$. 
\end{definition}
\begin{definition}
	It is clear that every set-collection $\ZA$ can be uniquely written in the form $\{\ZA_1,\ZA_2,\ldots,\ZA_n\}$, where each $\ZA_k$ is a connected set-collection. In that case we will say $\ZA_k$ is a connected component of $\ZA$. We say a set-collection $\ZA$ is standard if all E-classes of connected components $[\ZA_k]$ are different. 
\end{definition}

The notation $\#(A)$ will stand for the number of the elements of a finite set $A$. For given integers $l\ge 2$ and $p\ge 2$ we consider a class of double-coverings  
\begin{equation}\label{x26}
	\ZA=\{A_1,A_2,\ldots,A_{p}\},
\end{equation}
satisfying $\#(A_k)= l$. Clearly the number of different E-classes of each such a class of double-coverings is an integer depending only on $l$ and $p$. We denote this number by $\mu_{l,p}$. The number of E-classes of a standard double-covering  will be denoted by $\mu_{l,p}(\text{standard})$. For the number $\mu_{l,p}$ of the full family of double-coverings sometimes we will also use the notation $\mu_{l,p}(\text{full})$. We will consider also double-coverings \e{x26} with $\#(A_k)= l$. The number of such double-coverings and their standard types will be denoted by $\mu_{l,p}(\text{full})$ and $\mu_{l,p}(\text{standard})$ respectively. Similarly we define a wider class of double-coverings \e{x26}, where $A_k$ satisfy $1\le \#(A_k)\le l$. We call first and the second kind of coverings $(l,p)$ and $(l,p)^*$ double-coverings respectively. For the number of E-classes of $(l,p)^*$-double-coverings we will use the notation $\mu_{l,p}^*(\text{full})$.

The quantities $\mu_{l,p}$ and $\mu_{l,p}^*$ can be also considered as objects in the multigraph theory. Namely, $\mu_{l,p}$ ($\mu_{l,p}^*$) is the number of $p$-vertex unlabeled multigraphs with $l$-degree ($\le l$-degree) vertexes. Indeed, to each double-covering \e{x26} we associate a multigraph with vertexes $A_k$, where the number of edges connected two different vertexes $A_k$ and $A_j$ is equal to the number of elements in the intersection set $A_k\cap A_j$. In fact, the elements of the union $\cup_j A_j$ get role of edges in this graph.
Here is an example of $(3,6)$double-covering,
\begin{align*}
	A_1=\{1,2,3\},\\
	A_2=\{3,4,5\},\\
	A_3=\{4,5,6\},\\
	A_4=\{6,7,8\},\\
	A_5=\{7,8,9\},\\
	A_6=\{1,2,9\},
\end{align*}
with its graph in Fig.\ref{f1}.
\begin{figure}\label{f1}
\begin {center}
\begin {tikzpicture}[-latex ,auto ,node distance =2 cm and 3cm ,on grid ,
semithick ,
state/.style ={ circle ,top color =white , bottom color = processblue!20 ,
	draw,processblue , text=blue , minimum width =0.1cm}]
\node[state] (A){$A_1$};
\node[state] (B) [right= 2.5cm of A] {$A_2$};
\node[state] (C) [right= 2.5cm of B] {$A_3$};
\node[state] (D) [right= 2.5cm of C] {$A_4$};
\node[state] (E) [right= 2.5cm of D] {$A_5$};
\node[state] (F) [right= 2.5cm of  E ] {$A_6$};
\path (A) edge  [-]  [bend left =30] node[below =0.05 cm] {$1$} (F);
\path (A) edge  [-] [bend right =30] node[above =0.05 cm] {$2$} (F);
\path (B) edge  [-] [bend right =25] node[below =0.05 cm] {$4$} (C);
\path (B) edge  [-] [bend left=25] node[above =0.05 cm] {$5$} (C);
\path (C) edge  [-] node[below =0.05 cm] {$6$} (D);
\path (D) edge [-] [bend right =25] node[below =0.05 cm]  {$7$}(E);
\path (D) edge [-] [bend left =25] node[above =0.05 cm]  {$8$}(E);
\path (A) edge  [-]  node[below =0.05 cm] {$3$} (B);
\path (E) edge  [-]  node[below =0.05 cm] {$9$} (F);
\end{tikzpicture}
\end{center}
\caption{$(3,6)$-multi-graph.}
\end{figure}
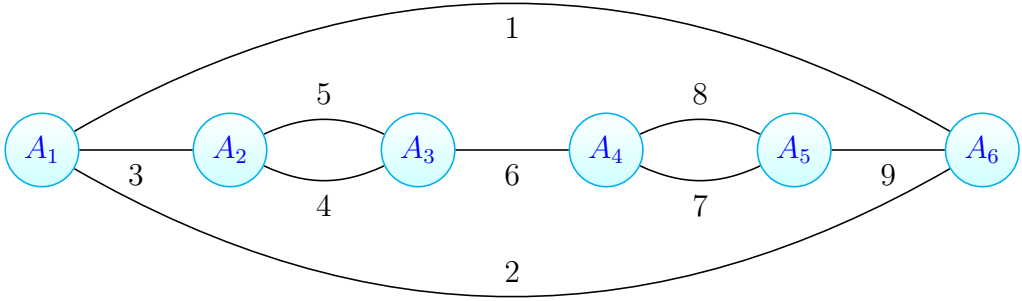

Double coverings with the parameter $l=2$ have a simple characterization in contrast to the case of $l>2$. Indeed, first observe that all the connected $(2,p)$-double-coverings \e{x26} are isomorphic to the cyclic set-collection $\{\{1,2\}, \{2,3\},\ldots,\{p-1,p\},\{p,1\}\}$. If a double-covering \e{x26} is arbitrary, then we have $\ZA=\{\ZA_1,\ZA_2,\ldots,\ZA_n\}$, where $\ZA_k$ are the connected components of $\ZA$. If $p_k=\#(\ZA_k)$, then we have  $p_k\ge 2$ and $p_1+\ldots+p_n=p$. So we conclude that $\mu_{2.p}$ is the number partitions of $p$ into a sum of integers $p_k\ge 2$. Thus, one can easily check that
\begin{equation}\label{b1}
	\mu_{2,p}=\nu(p)-\nu(p-1),
\end{equation}
where $\nu(n)$ is the partition function \e{part} (see Fig.\ref{f2}, an example of a double-covering with three connected components).
To characterize $(2,p)^*$-double-coverings \e{x26}, again first consider the connected case, which is possible whenever $p\ge 2$. In contrast to case $(2,p)$-double-coverings one can check that there are only two connected $(2,p)^*$-double-coverings. In Fig.\ref{f3} one can see the graphs of two connected $(2,3)^*$-double-coverings. Thus we can conclude that $\mu_{2,p}^*=2\mu_{2,p}$.
Then, based on \e{part} and \e{b1}, after a simple calculation one can get the following.
\begin{theorem}\label{P1}
	It holds the relation
	\begin{equation}
		\mu_{2,p}^*=2\mu_{2,p}\sim \frac{\pi\sqrt{2}}{12p\sqrt{p}}\exp\left(\pi\left(\frac{2p}{3}\right)^{1/2}\right)\text { as } p\to \infty.
	\end{equation}
\end{theorem}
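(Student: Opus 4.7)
The plan is to reduce the statement to a purely analytic claim about the partition function and then read the asymptotics of a first difference off Meinardus' formula \e{part}. The identity $\mu_{2,p}^{*}=2\mu_{2,p}$ is established in the paragraph preceding the theorem, and \e{b1} gives $\mu_{2,p}=\nu(p)-\nu(p-1)$. So everything boils down to showing
\begin{equation*}
\nu(p)-\nu(p-1)\sim\frac{\pi\sqrt{2}}{24\,p\sqrt{p}}\exp\!\Bigl(\pi\sqrt{2p/3}\Bigr)\quad\text{as }p\to\infty,
\end{equation*}
and then multiplying by $2$.

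Set $f(n)=\tfrac{1}{4n\sqrt{3}}$ and $g(n)=\pi\sqrt{2n/3}$, so that \e{part} reads $\nu(n)\sim f(n)e^{g(n)}$. I would write
\begin{equation*}
\nu(p)-\nu(p-1)=\nu(p)\Bigl(1-\tfrac{\nu(p-1)}{\nu(p)}\Bigr)
\end{equation*}
and feed \e{part} into the ratio $\nu(p-1)/\nu(p)$. Two Taylor expansions give $f(p-1)/f(p)=p/(p-1)=1+O(1/p)$ and
\begin{equation*}
g(p)-g(p-1)=\pi\sqrt{\tfrac{2}{3}}\cdot\frac{1}{\sqrt{p}+\sqrt{p-1}}=\frac{\pi}{\sqrt{6p}}+O(p^{-3/2}),
\end{equation*}
whence $1-\nu(p-1)/\nu(p)\sim \pi/\sqrt{6p}$. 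Multiplying by $\nu(p)\sim f(p)e^{g(p)}$ and collecting the numerical constant $\tfrac{1}{4\sqrt{3}\cdot\sqrt{6}}=\tfrac{1}{12\sqrt{2}}=\tfrac{\sqrt{2}}{24}$ produces the displayed asymptotic for $\nu(p)-\nu(p-1)$; the factor $2$ from $\mu_{2,p}^{*}=2\mu_{2,p}$ turns $\sqrt{2}/24$ into $\sqrt{2}/12$.

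The main obstacle is that the bare first-order form of \e{part} is not by itself strong enough to subtract: if $\nu(n)=f(n)e^{g(n)}(1+\varepsilon(n))$ with $\varepsilon(n)\to 0$, the linearisation above requires not merely $\varepsilon(n)=o(1)$ but the stronger $\varepsilon(p)-\varepsilon(p-1)=o(p^{-1/2})$, since the two quantities being subtracted differ by a relative factor of order $p^{-1/2}$. This refinement is delivered for free by the full Hardy--Ramanujan--Rademacher expansion, whose error beats any negative power of $p$. An equivalent and more self-contained option is to apply a standard saddle-point analysis directly to the generating function $\prod_{k\ge 2}(1-x^k)^{-1}=(1-x)\prod_{k\ge 1}(1-x^k)^{-1}$, whose $p$-th Taylor coefficient is exactly $\nu(p)-\nu(p-1)$ and which enumerates partitions of $p$ into parts $\ge 2$, in agreement with the combinatorial meaning of $\mu_{2,p}$.
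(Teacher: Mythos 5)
Your proposal is correct and follows essentially the same route as the paper, which reduces the theorem to $\mu_{2,p}^*=2\mu_{2,p}=2(\nu(p)-\nu(p-1))$ via \e{b1} and then leaves the asymptotics of the difference as ``a simple calculation'' from \e{part}; your bookkeeping of the constant, $\frac{1}{4\sqrt{3}}\cdot\frac{\pi}{\sqrt{6}}=\frac{\pi\sqrt{2}}{24}$ before the final factor $2$, reproduces the stated formula. Your remark that the bare first-order form of \e{part} does not by itself justify subtracting two quantities whose difference is smaller by a relative factor $p^{-1/2}$ is a genuine subtlety the paper glosses over, and either of your proposed remedies --- the full Hardy--Ramanujan--Rademacher expansion, or a direct Meinardus/saddle-point analysis of the generating function $\prod_{k\ge 2}(1-x^k)^{-1}$ for partitions into parts at least $2$ --- closes that gap.
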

We investigate the asymptotic behavior of quantities $\mu_{l,p}$ and $\mu_{l,p}^*$ as $p\to \infty$ when $l>2$. 
Based on  \e{b1} and the above mentioned huge formula of Hardy-Ramanujan-Rademacher for $\nu(n)$ one can understand that $\mu_{2,p}$ may not have simple precise formula.  The case of $l>2$ is much more complicated that the case of $l=2$,  and we do not know an exact asymptotic formula for $\mu_{l,p}$ as $l>2$. Instead, we can prove the following bounds.
\begin{theorem}\label{P0}
	If $l,p\ge 2$ are integers and $pl$ is even, then
	\begin{equation}\label{x24}
		(a(l))^{p}\cdot p^{p(l/2-1)}\le \mu_{l,p}(\text{standard})\le \mu_{l,p}^*(\text{full})\le (b(l))^{p}\cdot p^{p(l/2-1)},
	\end{equation}
	where $a(l)$ and $b(l)$ are positive constants depending on $l$.
\end{theorem}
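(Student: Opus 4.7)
The plan is to pass to the multigraph formulation indicated in the introduction. Each $(l,p)$-double-covering $\ZA=\{A_1,\ldots,A_p\}$ corresponds bijectively to a loop-free $l$-regular multigraph on $p$ vertices: the sets $A_k$ become vertices, and each element of $\bigcup_k A_k$ becomes an edge connecting the two sets that contain it. Analogously, $(l,p)^*$-double-coverings correspond to multigraphs with maximum degree at most $l$. The relation $\sim$ translates precisely to graph isomorphism, so $\mu_{l,p}^*(\text{full})$ equals the number of isomorphism classes of multigraphs on $p$ unlabeled vertices with max-degree $\le l$.

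Let $N_l(p)$ denote the number of labeled $l$-regular loop-free multigraphs on $[p]$. The configuration model yields
\md 0
N_l(p) \sim C(l)\cdot \frac{(pl)!}{(pl/2)!\,2^{pl/2}(l!)^p},
\emd
and Stirling's formula converts this into $c(l)^p\cdot p^{pl/2}$ up to polynomial factors absorbable into the multiplicative constants. The analogous count $N_l^*(p)$ for max-degree $\le l$ has the same order of magnitude, since the regime of all degrees equal to $l$ dominates.

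For the lower bound I would use the trivial inequality $\mu_{l,p}(\text{full})\ge N_l(p)/p!$, since each isomorphism class consists of at most $p!$ labeled multigraphs. Combining with $p!\sim(p/e)^p$ produces a bound of order $(e\cdot c(l))^p\cdot p^{p(l/2-1)}$, fixing $a(l)$. To replace ``full'' by ``standard'' I would use that for $l\ge 3$ a positive proportion of labeled $l$-regular multigraphs are connected (a classical consequence of the configuration model) and that a connected multigraph is automatically standard; the case $l=2$ is already covered by Theorem P1.

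For the upper bound, the naive inequality $\mu_{l,p}^*(\text{full})\le N_l^*(p)$ is too weak by a factor of order $p!$, so I would apply Burnside's lemma,
\md 0
\mu_{l,p}^*(\text{full}) = \frac{1}{p!} \sum_{\pi\in S_p} |\mathrm{Fix}(\pi)|,
\emd
where $\mathrm{Fix}(\pi)$ is the set of labeled multigraphs fixed by $\pi$. The identity contributes exactly $N_l^*(p)/p!$, giving the sought bound $b(l)^p p^{p(l/2-1)}$. For $\pi\ne \mathrm{id}$, a $\pi$-invariant multigraph is essentially determined by its quotient on $[p]/\langle\pi\rangle$, yielding a strictly smaller count; summing over conjugacy classes of $S_p$ one obtains geometric decay in the number of non-fixed points of $\pi$. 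The identity term dominates, and the total is at most $2N_l^*(p)/p!$ for large $p$. The chief obstacle is this last estimate --- controlling $|\mathrm{Fix}(\pi)|$ uniformly across cycle types of $S_p$ --- which is essentially the quantitative assertion that a typical labeled $l$-regular multigraph ($l\ge 3$) has trivial automorphism group.
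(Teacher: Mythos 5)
Your reduction to unlabeled multigraphs and your identification of the target order $N_l(p)/p!\asymp c(l)^p p^{p(l/2-1)}$ are both correct, and the lower bound you sketch (orbits have size at most $p!$, then pass from ``full'' to ``standard'' via connectivity of the random $l$-regular multigraph for $l\ge 3$) can be made to work, although it imports a nontrivial external fact where the paper instead gives a two-line explicit construction (ordered collections built from two partitions of $\ZN_{pl/2}$, \lem{L0}, followed by a deterministic surgery sending an arbitrary covering to a standard one with fibers of size $\lesssim p^2 2^p$, Lemmas \ref{L6}--\ref{L7}).

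The upper bound, however, has a genuine unfilled gap, and you have put your finger on it yourself: after Burnside, everything hinges on showing $\sum_{\pi\ne\mathrm{id}}|\mathrm{Fix}(\pi)|\le b(l)^p N_l^*(p)$. This cannot be waved away. Note that $\sum_{\pi}|\mathrm{Fix}(\pi)|=\sum_G|\mathrm{Aut}(G)|=p!\cdot\mu_{l,p}^*(\text{full})$, so bounding the non-identity terms by ``the identity dominates'' is exactly equivalent to the statement you are trying to prove; there is no free trivial estimate, and one must genuinely analyze, for each cycle type, how many degree-bounded multigraphs a permutation can fix (the margins are thin: for a fixed-point-free involution the count is already of order $p^{pl/4}$ against $p^{p/2}$ such involutions, which only closes for $l\ge 2$ with no room to spare). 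This is essentially the rigidity theory of random regular graphs, which is real work. The paper avoids it entirely by a change of group: instead of vertex permutations it considers permutations of the $pl/2$ underlying elements (edges), and \lem{L4} gives a \emph{deterministic} bound $(3l!)^{p/2}$ on the edge-stabilizer of any \emph{standard} covering, proved by an elementary exploration argument along a connected ordering $B_1,\dots,B_p$ (once $\sigma$ is fixed on $\cup_{j\le m}B_j$, only the $l_{m+1}!$ arrangements of the new elements of $B_{m+1}$ remain free). Since $(3l!)^{p/2}$ is exponentially small against $(pl/2)!$, every standard class has orbit size at least $(pl/2)!/(3l!)^{p/2}$, which immediately bounds the number of connected classes; the disconnected classes — precisely the ones whose vertex-automorphism groups can be as large as $(p/2)!$ and which would wreck your identity-dominates heuristic for $\mu_{l,p}^*(\text{full})$ — are then handled by an induction on $p$ over the decomposition into connected components (estimate \eqref{a54}). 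If you want to salvage the Burnside route you must either prove the quantitative fixed-point estimates across all cycle types, or restrict first to connected coverings and add the component induction, at which point you have essentially rediscovered the paper's argument in a harder coordinate system.
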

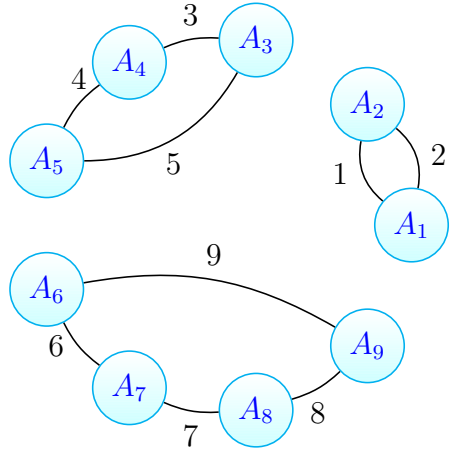
\begin{figure}\label{f2}
	\begin {center}
	\begin{tikzpicture}[semithick ,state/.style ={ circle ,top color =white , bottom color = processblue!20 ,
			draw,processblue , text=blue , minimum width =0.1cm}]
		\node[state] (A) at ( 0:2.5) {$A_1$};
		\node[state] (B) at ( 40:2.5) {$A_2$};
		\node[state] (C) at (2*40:2.5) {$A_3$};
		\node[state] (D) at (3*40:2.5) {$A_4$};
		\node[state] (E) at (4*40:2.5) {$A_5$};
		\node[state] (F) at (5*40:2.5) {$A_6$};
		\node[state] (G) at (6*40:2.5) {$A_7$};
		\node[state] (H) at (7*40:2.5) {$A_8$};
		\node[state] (I) at (8*40:2.5) {$A_9$};
		\path (A) edge  [-]  [bend left =30] node[left  =0.05 cm] {$1$} (B);
		\path (A) edge  [-] [bend right =30] node[right =0.05 cm] {$2$} (B);
		\path (C) edge  [-]   [bend right =13] node[above =0.04 cm] {$3$}(D);
		\path (D) edge  [-]  [bend right =13] node[above =0.05 cm] {$4$}(E);
		\path (C) edge  [-] [bend left =30] node[below =0.05 cm] {$5$}(E);
		\path (F) edge  [-] [bend right =13] node[left =0.05 cm] {$6$}(G);
		\path (G) edge  [-] [bend right =13]node[below =0.05 cm] {$7$}(H);
		\path (H) edge  [-] [bend right =13] node[below =0.05 cm] {$8$}(I);
		\path (F) edge  [-] [bend left =20] node[above =0.05 cm] {$9$}(I);
	\end{tikzpicture}
\end{center}
\caption{a multi-graph with $l=2$, $p=9$.}
\end{figure}
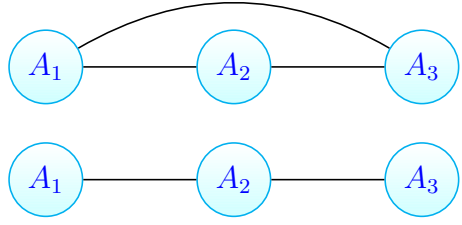
\begin{figure}\label{f3}
	\begin {center}
	\begin {tikzpicture}[-latex ,auto ,node distance =2 cm and 3cm ,on grid ,
	semithick ,
	state/.style ={ circle ,top color =white , bottom color = processblue!20 ,
		draw,processblue , text=blue , minimum width =0.1cm}]
	\node[state] (A){$A_1$};
	\node[state] (B) [right= 2.5cm of A] {$A_2$};
	\node[state] (C) [right= 2.5cm of B] {$A_3$};
	\node[state] (D) [below= 1.5cm of A] {$A_1$};
	\node[state] (E) [right= 2.5cm of D] {$A_2$};
	\node[state] (F) [right= 2.5cm of  E ] {$A_3$};
	\path (B) edge  [-] (C);
	\path (D) edge [-] (E);
	\path (A) edge  [-]  (B);
	\path (E) edge  [-]   (F);
	\path (A) edge  [-] [bend left =30]  (C);
\end{tikzpicture}
\end{center}
\caption{two connected $(2,3)^*$-multi-graphs.}
\end{figure}
Using these estimates, we provide an alternative approach to an inequality involving Rademacher chaos sums proved independently  by Bonami \cite{Bon}  and Kiener \cite{Kie} (see also \cite{Mul} or \cite{Blei}). Let $\{r_n\}_{n\ge 1}$ be the sequence of Rademacher functions $r_n(x)=\sign\left(\sin 2^n\pi x\right)$.
The classical Khintchine inequality states that for any $0<p<\infty$, there are constants $A_p,B_p$ such that
\begin{equation}\label{1}
	A_p\left(\sum_{k=1}^na_k^2\right)^{1/2}\le \left\|\sum_{k=1}^n a_kr_k(x)\right\|_p\le B_p\left(\sum_{k=1}^na_k^2\right)^{1/2}.
\end{equation}
The Khintchine inequality is a well-known object in analysis and probability with various generalizations and applications. A special case of the inequality was first studied by Khintchine \cite{Khi}, proving the right bound of \e{1} with $B_p=\sqrt{p/2+1}$ and $p\ge 2$. Further study of  the inequality were given by Littlewood \cite{Lit}, Paley and Zygmund \cite{PaZy}. 
Let $A_p$ and $B_p$ denote the best constants, for which inequality \e{1} holds. It is trivial that $A_p=1$ if $2\le p<\infty$ and $B_p=1$ for all $0<p\le 2$. In 1961 Stechkin \cite{Ste} proved $B_{p}=((p-1)!!)^{1/p}$ for even integers $p\ge 4$, which then extended for all real numbers $p\ge 3$ by Young \cite{You}. Then solving a long outstanding problem of Littlewood (see. \cite{Lit}) Szarek \cite{Sza} proved that $A_1=1/\sqrt{2}$. In 1982 Haagerup \cite{Haa} introduced a new method, enabling to find the sharp constants in remaining cases and  recovering the prior results newly. He proved that
\begin{equation}
	B_p=2^{1/2}\big(\Gamma(p+1/2)/\pi\big)^{1/p},\quad 2<p<\infty,
\end{equation}
and
\begin{align}
	A_p=\left\{\begin{array}{lrl}
		2^{1/2-1/p}&\text{ if }&0<p<p_0,\\
		2^{1/2}\big(\Gamma(p+1/2)/\sqrt{\pi}\big)^{1/p}&\text{ if }&p>p_0,
	\end{array}
	\right.
\end{align}
where $1<p_0<2$ is the solution of the equation $\Gamma(p+1/2)=\sqrt{\pi}/2$.

Now consider orthonormal systems generated by the products of fixed number of Rademacher functions. For a finite set $A\subset \ZN$ we denote
\begin{equation*}
	w_A(x)=\prod_{k\in A}r_k(x).
\end{equation*}
Recall that such products generate the classical orthonormal system of Walsh functions $W=\{w_A:\, A\subset \ZN\}$. Given integer $l\ge 2$ we denote $\zZ_l=\{A\subset \ZN:\, \#(A)=l\}$ and consider a subsystem $W_l=\{w_A:\, A\in \zZ_l\}\subset W$  of Walsh functions formed by the $l$-wise products of the Rademacher functions. In some literature the system $W_l$ is called a Rademacher chaos. A multiple numerical sequence $b=\{b_A:\, A\in \zZ_l\}$ with the norm $\|b\|=\big(\sum |b_A|^2\big)^{1/2}$ is said to be  finite if there are only finite number of non-zero terms in it. We prove the following bound for the Rademacher chaos sums.
\begin{theorem}\label{T1}
	If $l,p\ge 2$ are integers and $p$ is even, then for any finite sequence $\{b_A:\, A\in \zZ_l\}$ it holds 
	\begin{equation}\label{x33}
		\frac{\big(p!\mu_{l,p}(\text{standard})\big)^{1/p}}{2\sqrt{3}}\le \sup_{\|b\|\le 1}\left\|\sum_{A\in \zZ_l}	b_Aw_A\right\|_p\le \sqrt{l!}\cdot \big(p!\mu_{l,p}(\text{full})\big)^{1/p}.
	\end{equation}
\end{theorem}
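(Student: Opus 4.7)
My plan is to extract both bounds from the explicit moment identity
\begin{equation*}
\|f\|_p^p=\sum_{(A_1,\ldots,A_p)\in\zZ_l^p}b_{A_1}\cdots b_{A_p}\cdot\chi_{\mathrm{even}}(A_1,\ldots,A_p),
\end{equation*}
where $f=\sum_{A\in\zZ_l}b_Aw_A$ and $\chi_{\mathrm{even}}$ indicates that $\{A_k\}$ forms an even-covering. The identity follows at once from $w_Aw_B=w_{A\triangle B}$ and $\int w_C\,dx=[C=\varnothing]$, and is the common starting point for both directions.

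For the upper bound I would group the right-hand side by the E-class $\ZC$ of $\{A_1,\ldots,A_p\}$. Fixing a canonical representative $(B_1,\ldots,B_p)$ of $\ZC$ whose universe is $\{1,\ldots,m\}$, every ordered realization inside $\ZC$ has the form $A_k=\phi(B_{\sigma(k)})$ with $\phi\colon\{1,\ldots,m\}\hookrightarrow\ZN$ and $\sigma\in S_p$, counted with multiplicity $|\mathrm{Aut}(\ZC)|$. Symmetry in $k$ turns the sum in $\sigma$ into the factor $p!$, reducing the contribution of $\ZC$ to $(p!/|\mathrm{Aut}(\ZC)|)\sum_\phi\prod_v b_{\phi(B_v)}$. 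The heart of the argument is then the tensor-contraction bound
\begin{equation*}
\Big|\sum_\phi\prod_{v=1}^pb_{\phi(B_v)}\Big|\le (l!)^{p/2}\,|\mathrm{Aut}(\ZC)|\,\|b\|_2^p,
\end{equation*}
which I would establish by iterated Cauchy-Schwarz along the $l$-regular multigraph carried by $\ZC$, processing one vertex at a time; here $(l!)^{p/2}$ accounts for the orderings of the $l$ elements inside each $B_v$, and $|\mathrm{Aut}(\ZC)|$ absorbs the resulting symmetry overcount. Even-coverings that are not double-coverings are dominated by double-coverings via a pair-partition (Stechkin-type) majorization based on $\int r_i^{2k}\,dx=1$, so the sum over at most $\mu_{l,p}(\mathrm{full})$ double-covering classes already controls everything. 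Taking $p$-th roots yields the right-hand inequality in \eqref{x33}.

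For the lower bound I would specialize to $b_A=\binom{N}{l}^{-1/2}$ on every $l$-subset of $\{1,\ldots,N\}$ (and zero otherwise) for a large integer $N$, so that $\|b\|_2=1$ and $b_A\ge0$. Discarding the non-double even-cover contributions gives
\begin{equation*}
\|f\|_p^p\ge\binom{N}{l}^{-p/2}\cdot\#\bigl\{\text{ordered double-coverings }(A_1,\ldots,A_p)\text{ with }A_k\subset\{1,\ldots,N\}\bigr\}.
\end{equation*}
For each standard E-class $\ZC$ there are exactly $p!\cdot N!/((N-m)!\,|\mathrm{Aut}(\ZC)|)$ such realizations with $m=pl/2$, and $N!/((N-m)!\,\binom{N}{l}^{p/2})\to(l!)^{p/2}$ as $N\to\infty$ absorbs the normalization. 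Standardness forces $\mathrm{Aut}(\ZC)$ to split as a product over pairwise non-isomorphic connected components, and a per-component estimate (each connected $(l,p_j)$-double-covering has automorphism group polynomial in $p_j$) yields a crude uniform bound $|\mathrm{Aut}(\ZC)|\le 12^{p/2}$ for all standard $\ZC$; the constant $1/(2\sqrt3)=1/\sqrt{12}$ in the denominator of \eqref{x33} records precisely this worst-case loss.

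The main obstacle is the tensor-contraction inequality used in the upper bound: producing the \emph{precise} combinatorial constant $(l!)^{p/2}|\mathrm{Aut}(\ZC)|$, rather than some unspecified graph-dependent quantity, demands a vertex-elimination order on the multigraph matched simultaneously to parallel-edge multiplicities and to the symmetric action of $S_l$ inside each $B_v$. The analogous quantitative control of $|\mathrm{Aut}(\ZC)|$ for standard classes, required on the lower-bound side, is the second delicate point; I expect both difficulties to be addressed together through a common labeled-enumeration framework for the $m$ elements and the $p$ sets.
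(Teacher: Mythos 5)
Your overall architecture coincides with the paper's: the multinomial expansion of the $p$-th moment plus Walsh orthogonality, the reduction of general even-coverings to double-coverings (your ``Stechkin-type majorization'' is the paper's \lem{L1}), an iterated Cauchy--Schwarz along the multigraph for the upper bound (the paper's \lem{L10}), and the test function $\binom{N}{l}^{-1/2}\sum_{A\subset\{1,\ldots,N\}}w_A$ with an automorphism count for the lower bound (the paper's \lem{L4}). The problem is that the two steps you yourself flag as ``delicate'' are not routine details but the entire technical content of the theorem, and neither is carried out. For the upper bound, the tensor-contraction inequality is exactly \lem{L10}; the $|\mathrm{Aut}(\ZC)|$ factor you insert on its right-hand side is unnecessary (the inequality holds without it), and your proposed proof ``processing one vertex at a time'' is not the right induction: the $l$ variables of a vertex are shared with several different neighbours, so a single Cauchy--Schwarz does not decouple them. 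The correct induction peels off one \emph{shared block} $I_1\cap I_2$ at a time, using the double-covering property to guarantee that $I_1\cap I_2$ is disjoint from every other $I_j$; the factor $(l!)^{p/2}$ then comes only from converting the ordered sum $\sideset{}{'}\sum_{I_k}b_{I_k}^2$ into $l!\,\|b\|^2$, not from the Cauchy--Schwarz steps themselves.

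On the lower-bound side there is an outright false claim: a connected $(l,p_j)$-double-covering does \emph{not} in general have automorphism group of size polynomial in $p_j$, and the uniform bound $|\mathrm{Aut}(\ZC)|\le 12^{p/2}$ is false. For even $l\ge 4$, a cycle $A_1,\ldots,A_{p_j}$ in which consecutive sets share $l/2$ elements admits at least $((l/2)!)^{p_j}$ automorphisms permuting the shared blocks internally, which is exponential in $p_j$; for $l=3$ an alternating chain of double and single edges gives $2^{p_j/2}$. The true bound, which is what \lem{L4} establishes, is of the form $(Cl!)^{p/2}$, and proving it requires ordering the sets of each connected component so that each new set meets the union of the previous ones, a convexity argument for products of factorials, the observation that standardness forces automorphisms to preserve components, and the estimate $\prod_k p_k\le 3^{p/2}$. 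Your computation would in fact survive with this correct bound, because the $(l!)^{p/2}$ you lose there is exactly cancelled by the factor $N!/((N-m)!\binom{N}{l}^{p/2})\to(l!)^{p/2}$ that you already track; but as written, the constant $1/(2\sqrt 3)$ rests on an incorrect automorphism estimate supported by a false per-component claim. In short: right skeleton, but both load-bearing lemmas are missing, and the one you sketch in most detail is sketched wrongly.
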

From Theorems \ref{P0} and \ref{T1} it immediately follows the Bonami-Kiener hypercontraction inequality, which is an extension of the Kintchine inequality for the Rademacher chaos sums.
\begin{corollary}[Bonami-Kiener, \cite{Bon,Kie}]\label{C1}
	Let $l\ge 2$ be an integer and $p> 2$.  Then for any finite sequence $b=\{b_A:\,A\in \zZ_l\}$, it holds the bound 
	\begin{equation}\label{x1}
		c_l\cdot p^{l/2}\le \sup_{\|b\|\le 1}	\left\|\sum_{A\in \zZ_l}b_Aw_A\right\|_p\le C_l\cdot p^{l/2},
	\end{equation}
	where $C_l>c_l>0$ are constants, depending only on $l$. 
\end{corollary}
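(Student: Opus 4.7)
The plan is to feed the counting estimates of \trm{P0} into the norm bounds of \trm{T1} and extract the polynomial factor $p^{l/2}$ via Stirling's formula. Once both halves of \e{x1} are verified along even integers $p$, extending them to every real $p>2$ will be routine through the monotonicity of $L^p$-norms on the probability space $[0,1]$. No essential obstacle arises, since the combinatorial counts of \trm{P0} and the Khintchine-type bounds of \trm{T1} are already in hand; the only point to watch is that the parity hypothesis ``$pl$ even'' of \trm{P0} is automatic once $p$ is chosen even.

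Fix first an even integer $p\ge 4$. Inserting the right bound of \e{x24} into the right side of \e{x33} and using the trivial estimate $(p!)^{1/p}\le p$ yields
\md0
	\sup_{\|b\|\le 1}\left\|\sum_{A\in \zZ_l}b_Aw_A\right\|_p\le \sqrt{l!}\cdot b(l)\cdot p^{l/2}.
\emd
Symmetrically, plugging the left bound of \e{x24} into the left side of \e{x33} and invoking Stirling in the form $(p!)^{1/p}\ge p/e$ produces the matching lower bound
\md0
	\sup_{\|b\|\le 1}\left\|\sum_{A\in \zZ_l}b_Aw_A\right\|_p\ge \frac{a(l)}{2\sqrt 3\, e}\cdot p^{l/2},
\emd
so \e{x1} is established along the subsequence of even integers.

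For an arbitrary real $p>2$ I would invoke that $\|f\|_p$ is non-decreasing in $p$ on $[0,1]$. For the upper direction, let $q$ be the smallest even integer with $q\ge p$; then $q\le p+2\le 2p$, and monotonicity transfers the even-integer estimate from $q$ to $p$ at the cost of the factor $2^{l/2}$. For the lower direction with $p\ge 4$, choose $q$ to be the largest even integer $\le p$, so $q\ge p-2\ge p/2$, and the same monotonicity pushes the estimate from $q$ down to $p$, losing another factor $2^{-l/2}$. On the residual interval $2<p<4$, the single-term choice $b_{A_0}=1$ (with $A_0\in\zZ_l$ arbitrary) already gives $\sup\ge \|w_{A_0}\|_p=1$, which exceeds $2^{-l}\cdot p^{l/2}$ because $p^{l/2}<2^l$ on that range. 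Absorbing all these factors into new constants $c_l$ and $C_l$ completes the proof of \e{x1}.
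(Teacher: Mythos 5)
Your proposal is correct and follows exactly the route the paper intends: the paper states that Corollary \ref{C1} ``immediately follows'' from Theorems \ref{P0} and \ref{T1}, and you have supplied precisely the missing details — the Stirling bounds $p/e\le (p!)^{1/p}\le p$, the observation that $\mu_{l,p}(\text{full})\le\mu_{l,p}^*(\text{full})$, and the transfer from even integers to all real $p>2$ via monotonicity of $L^p$-norms on $[0,1]$ (including the separate treatment of $2<p<4$). No gaps.
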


\section{Proof of \trm{P0}}\label{S2}

We will need several lemmas. The notation $a\lesssim b$ in this section will stand for the inequality $a\le (c(l))^{p}\cdot b$, where $c(l)$ is a constant depending only on the integer $l$. Having $a\lesssim b$ and $b\lesssim a$ simultaneously, we will write $a\asymp b$. We let $\ZN_k=\{1,2,\ldots, k\}$ and denote by $\pi_k$ the family of all one-to-one mappings (rearrangements) $\sigma:\ZN_k\to \ZN_k$. In case of even $lp$ denote by $\ZD_{l,p}$ and $\ZD^*_{l,p}$ the families of double coverings $\{A_1,\ldots,A_{p}\}$, $A_j\subset \ZN_{pl/2}$, satisfying $\#(A_j)=  l$ or $\#(A_j)\le   l$ respectively. 
It is clear that each double-covering \e{x26} has its isomorphic set-collection in $\ZD_{l,p}^*$. So we can write $\mu_{l,p}^*=\#([\ZD_{l,p}^*])$. We will consider different subfamilies of $\ZD_{l,p}$, indicating the nature of the set-collections of the subfamily. For example the subfamily of the standard set-collections of $\ZD_{l,p}$ will be denoted by $\ZD_{l,p}(\text{standard})$, writing also $\mu_{l,p}(\text{standard})=\#([\ZD_{l,k}(\text{standard})])$. For the full family $\ZD_{l,p}$ sometimes we will also use the notation $\ZD_{l,p}(\text{full})$.
\begin{lemma}\label{L0}
	If $lp$ is even, then we have
	\begin{equation}\label{a24}
		p^{p(l-1)}\lesssim\#(\ZD_{l,p}(\text{full}))\le \#(\ZD_{l,p}^*(\text{full}))\lesssim  p^{p(l-1)}.
	\end{equation}
\end{lemma}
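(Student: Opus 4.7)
I would prove the two halves separately, with different methods.

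For the upper bound, observe that every element of $\ZD_{l,p}^*(\text{full})$ is, by Definition 1.1, an unordered $p$-element multiset whose entries are subsets of $\ZN_{pl/2}$ of cardinality at most $l$. The number of such subsets is
\[
	N := \sum_{k=0}^{l}\binom{pl/2}{k} \le (l+1)\binom{pl/2}{l} \lesssim p^{l},
\]
so the number of $p$-multisets drawn from a universe of size $N$ satisfies
\[
	\#(\ZD_{l,p}^*(\text{full})) \le \binom{N+p-1}{p} \le \frac{(N+p)^{p}}{p!} \lesssim \frac{p^{pl}}{p!} \lesssim p^{p(l-1)},
\]
the final step by Stirling, with the residual exponential factor $(2e/l!)^{p}$ absorbed into $b(l)^{p}$.

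For the lower bound I identify a double-covering with an $l$-regular multigraph on the $p$ labeled vertices $A_{1},\ldots,A_{p}$ whose $pl/2$ edges carry the labels $1,\ldots,pl/2$ and which contains no self-loop. Writing $N_{\mathrm{ord}}$ for the number of such ordered realizations, we have $\#(\ZD_{l,p}(\text{full}))\ge N_{\mathrm{ord}}/p!$, since each unordered set-collection lifts to at most $p!$ ordered tuples. To estimate $N_{\mathrm{ord}}$ from below I use the configuration model: attach labeled ``stubs'' $(k,1),\ldots,(k,l)$ to $A_{k}$, pair the $pl$ stubs into $pl/2$ labeled pairs, and forget the stub labels. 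The total number of labeled pairings is $(pl)!/2^{pl/2}$; on no-self-loop pairings the stub-permutation group $S_{l}^{\,p}$ acts freely, hence
\[
	N_{\mathrm{ord}} = \frac{T_{p,l}}{(l!)^{p}}, \qquad T_{p,l} := \#\{\text{no-self-loop labeled pairings}\}.
\]
A Bonferroni estimate then yields $T_{p,l}\ge c_{l}(pl)!/2^{pl/2}$ for some $c_{l}>0$ depending only on $l$: the expected number of self-loops in a uniformly random pairing is $p\binom{l}{2}/(pl-1)\to(l-1)/2$, bounded independently of $p$. Combining with Stirling,
\[
	\frac{(pl)!}{2^{pl/2}(l!)^{p}\,p!} \sim p^{p(l-1)}\cdot\left(\frac{l^{l}e^{1-l}}{2^{l/2}\,l!}\right)^{\!p},
\]
which delivers the desired lower bound with the appropriate $a(l)^{p}$.

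The main technical point is the uniform-in-$p$ positive lower bound on the fraction of self-loop-free stub pairings; this is classical in the random-regular-graph literature, and a self-contained argument can be written out by a truncated inclusion-exclusion that exploits the fact that the expected self-loop count is bounded as $p\to\infty$.
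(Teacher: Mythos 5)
Your argument is correct in outline but takes a genuinely different route on both sides of the estimate. For the upper bound you simply count all $p$-element multisets of subsets of $\ZN_{pl/2}$ of cardinality at most $l$, ignoring the double-covering constraint entirely; this is cleaner than the paper's argument, which passes to ordered collections, lifts each one to a family of pairwise disjoint sets in the doubled ground set $\ZN_2\times\ZN_{pl/2}$, and counts those. Both yield $\lesssim p^{pl}/p!\lesssim p^{p(l-1)}$. For the lower bound the two proofs diverge more sharply. The paper exhibits an explicit family of ordered double-coverings --- for even $p$, tuples whose first and second halves each partition $\ZN_{pl/2}$ into $l$-sets, giving $\bigl((pl/2)!/(l!)^{p/2}\bigr)^2\gtrsim p^{pl}$ ordered collections, with a separate construction for odd $p$ --- and then divides by $p!$. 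You instead identify ordered double-coverings with edge-labelled loopless $l$-regular multigraphs and count them through the configuration model; your bijection, the freeness of the $S_l^p$-action on loopless pairings, and the Stirling computation are all correct, and your method has the small advantage of treating even and odd $p$ uniformly.

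The one place where your write-up is not self-contained is the claim $T_{p,l}\ge c_l\,(pl)!/2^{pl/2}$, i.e.\ that a fraction of stub-pairings bounded below uniformly in $p$ is loopless. The claim is true and classical (the loopless probability tends to $e^{-(l-1)/2}$), but be aware that a first-moment Bonferroni bound does not close it: the expected loop count is about $(l-1)/2\ge 1$ for $l\ge 3$, so one genuinely needs the factorial-moment/Poisson computation or a switching argument, which is a nontrivial page of work if written out. Since the lemma tolerates a loss of $(c(l))^{p}$, this machinery is also overkill: it suffices to exhibit $(c(l))^{-p}(pl)!/2^{pl/2}$ loopless pairings directly, for instance by restricting to pairings compatible with the paper's two-partition construction. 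I would either import that explicit construction at this point or write the truncated inclusion--exclusion out in full; as it stands this is the only step of your proof that rests on an uncited external fact rather than on an argument.
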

\begin{proof}
	Along with $\ZD_{l,p}^*$ ($\bar \ZD_{l,p}$)  consider the family $\bar \ZD_{l,p}^*$ ($\bar \ZD_{l,p}$) of ordered-collections  $(A_1,\ldots,A_p)$ such that $\{A_1,\ldots,A_p\}\in \ZD_{l,p}^*$ ($\bar \ZD_{l,p}$) . Given double-covering $\ZA=\{A_1,\ldots,A_p\}\in \ZD_{l,p}$ and a rearrangement $\sigma\in \pi_p$ generate another ordered-collection
	\begin{equation}\label{x22}
		(A_{\sigma{(1)}},\ldots,A_{\sigma(p)})\in \bar \ZD_{l,p}^*.
	\end{equation}
	It is clear that the number of different such ordered-collections has an upper bound $p!$ and this bound is reached when all the sets $A_j$ are different. On the other hand let us see that such rearrangements $\sigma$ can generate at least $p!/2^{p/2}$ different ordered-collections \e{x22} ($p$ may also be odd). Indeed, since $\ZA$ is a double-covering it is not possible an equality of three elements $A_j$. So one can see that such a lower bound is reached when each $A_j$ has an equal pair say $A_{j'}$, i.e. $A_j=A_{j'}$. So the number of rearrangements generating the same ordered-collections in \e{a24} can not exceed $2^{p/2}$. This implies that there are at least $p!/2^{p/2}$ different ordered-collections that can be obtained in \e{a24}. Thus, we conclude
	\begin{equation}\label{x23}
		\frac{ \#(\bar \ZD_{l,p}^*)}{p!}\le \#(\ZD_{l,p}^*)\le \frac{ 2^{p/2}\#(\bar \ZD_{l,p}^*)}{p!},
	\end{equation}
	and the same inequality also for $\bar \ZD_{l,p}$. First let us prove the upper bound in \e{x24}. Consider the Cartesian product $G_{l,p}=\ZN_2\times  \ZN_{pl/2}$. For fixed integers $1\le l_j\le l$ randomly chose pairwise disjoint sets $B_j\subset G_{l,p}$, $j=1,2,\ldots,p$, such that  $\#(B_j)=l_j$. Clearly the number of such choices of ordered-collections  $(B_1,\ldots,B_p)$ is equal
	\begin{equation}
		\binom{pl}{l_1}\cdot 	\binom{pl-l_1}{l_2}\ldots \binom{pl-l_1-\ldots-l_{p-1}}{l_p}= \frac{(pl)!}{(l!)^{p}}\lesssim p^{pl}.\label{a23}
	\end{equation}
	Let $(B_1,\ldots,B_p)$ be one of such a choice and suppose $A_j\subset \ZN_{pl/2}$ is the projection of $B_j$, $j=1,2,\ldots,p$, on $\ZN_{pl/2}$. One can check that among with others in this way we obtain all ordered-collections $(A_1,\ldots,A_p)\in \bar\ZD_{l,p}^*$ with $\#(B_j)=l_j$. On the other hand the number of choices $1\le l_j\le $ is equal $l^p$. Thus we conclude $\#(\bar \ZD_{ l,p}^*)\lesssim p^{pl}$.  Combining this and the right inequality of \e{x23}, we get the upper bound of \e{a24}.
	
	To prove the lower bound of \e{a24}, we first suppose that $p$ is even. Then we consider only the ordered-collection $(A_1,\ldots,A_{p})\in \bar \ZD_{l,p}$, where  $\{A_1,\ldots,A_{p/2}\}$ and $\{A_{p/2+1},\ldots, A_{p}\}$ separately form partitions of $\ZN_{pl/2}$. The number of such ordered-collections $(A_1,\ldots,A_{p})$ is equal  $\left(\frac{(pl/2)!}{(l!)^{p/2}}\right)^2$. So we have the bound
	\begin{equation*}
		\#(\bar \ZD_{l,p})\ge \left(\frac{(pl/2)!}{(l!)^{p/2}}\right)^2\gtrsim p^{pl},
	\end{equation*}
	which together with \e{x23} yields the lower bound of \e{a24}. If $p$ is odd and $p=2t+1$, then $l$ must be even. Consider the collections $(A_1,\ldots,A_{p})\in \bar \ZD_{l,p}$, where  each of two set-collections $\{A_1,\ldots,A_{t}\}$ and $\{A_{t+1},\ldots, A_{2t}\}$ consists of pairwise disjoint sets from $\ZN_{pl/2}$, the sets
	\begin{equation}
		A_{2t+1}\cap\left(\cup_{k=1}^{t}A_k\right),\quad A_{2t+1}\cap\left(\cup_{k=t+1}^{2t}A_k\right)
	\end{equation}
	are pairwise disjoint and both have $l/2$ elements. Using a similar argument one can calculate that the number of such ordered-collections is equal 
	\begin{equation*}
		\binom{pl/2}{l}\binom{l}{l/2}\left(\frac{(tl)!}{(l!)^t}\right)^2\gtrsim p^{pl}.
	\end{equation*} 
	Thus we similarly obtain $\#(\bar \ZD_{l,p})\gtrsim p^{pl}$. So, using the left inequality in \e{x23}, we get lower bound of \e{a24}.
\end{proof}

\begin{lemma}\label{L4}
	If $\ZA=\{A_1,A_2,\ldots,A_{p}\}\in \ZD_{l,p}^*(\text{standard})$, then the number of rearrangements $\sigma\in \pi_{pl/2}$, satisfying 
	\begin{equation}\label{a51}
		\{\sigma(A_1),\sigma(A_2),\ldots,\sigma(A_{p})\}=\{A_1,A_2,\ldots,A_{p}\},
	\end{equation}
	doesn't exceed $(3l!)^{p/2}$. 
\end{lemma}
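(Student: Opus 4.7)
The plan is to prove the stated bound in two stages: first reduce to a single connected component using the standardness hypothesis, and then estimate the stabilizer of a connected component via a BFS traversal. Write $\ZA=\{\ZA_1,\ldots,\ZA_n\}$ for the decomposition into connected components, with $\ZA_k$ having $p_k$ vertices and pairwise disjoint ground sets $V_k\subset \ZN_{pl/2}$, so $\sum_k p_k=p$. Any $\sigma$ satisfying \e{a51} must induce a graph automorphism $\tau$ of the associated multigraph, which permutes the components; the standardness of $\ZA$ (distinct E-classes for distinct components) forces $\tau$ to fix each component setwise. Consequently $\sigma$ maps each $V_k$ into itself, and the stabilizer factorises as $|\mathrm{Stab}(\ZA)|=\prod_k|\mathrm{Stab}(\ZA_k)|$; it therefore suffices to show $|\mathrm{Stab}(\ZA_c)|\le (3\cdot l!)^{p_c/2}$ for a single connected component $\ZA_c$ with $p_c$ vertices.

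For this, I would fix a BFS ordering $v_0,\ldots,v_{p_c-1}$ of the vertices of $\ZA_c$ rooted at an arbitrary $v_0$, and for each $k\ge 1$ let $e_k$ denote the BFS tree edge joining $v_k$ to its parent $u_k\in\{v_0,\ldots,v_{k-1}\}$. A generic $\sigma$ is then built step by step: (a) choose $\tau(v_0)$ inside the same component, at most $p_c$ options; (b) choose a bijection $\sigma|_{A_{v_0}}\colon A_{v_0}\to A_{\tau(v_0)}$, at most $l!$ options; (c) for each $k\ge 1$, the value $\sigma(e_k)$ is already determined by earlier steps (since $e_k\in A_{u_k}$), and because $\sigma(e_k)$ lies in exactly two sets of $\ZA$ one of which is $A_{\tau(u_k)}$, the other set forces $\tau(v_k)$; the $\sigma$-values on elements of $A_{v_k}$ that already belong to some $A_{v_j}$ with $j<k$ are then inherited, and only the bijection on the new elements $N_k:=A_{v_k}\setminus\bigcup_{j<k}A_{v_j}$ remains to be chosen, giving at most $n_k!$ extensions, where $n_k:=|N_k|$ (if the inherited partial map happens to be inconsistent the true number of extensions is $0$, which only strengthens the bound). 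Setting $n_0:=|A_{v_0}|$, this yields
\begin{equation*}
|\mathrm{Stab}(\ZA_c)|\le p_c\cdot \prod_{k=0}^{p_c-1}n_k!.
\end{equation*}

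The last step is to bound the product on the right. Each element of the component's ground set is new at exactly one BFS step, so $\sum_k n_k=|V_c|$; the double-covering identity $\sum_v|A_v|=2|V_c|$ together with $|A_v|\le l$ then gives $\sum_k n_k\le p_cl/2$. Since $n\mapsto \log(n!)$ is convex on the nonnegative integers, under the constraints $0\le n_k\le l$ and $\sum_k n_k\le p_cl/2$ the product $\prod_k n_k!$ is maximised when its mass concentrates -- that is, with as many $n_k$ as possible equal to $l$ -- and this maximum is at most $(l!)^{p_c/2}$. Combined with the elementary bound $p_c\le 3^{p_c/2}$, valid for every $p_c\ge 1$, this gives $|\mathrm{Stab}(\ZA_c)|\le (3\cdot l!)^{p_c/2}$; multiplying over the connected components produces the target estimate $(3\cdot l!)^{p/2}$. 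The main obstacle I anticipate is the concentration/convexity step bounding $\prod_k n_k!$: it can be carried out by iteratively applying the merger inequality $a!\,b!\le (a+b)!$ whenever $a+b\le l$, with a possible leftover half-sized term handled via $(l/2)!\le\sqrt{l!}$. A lesser subtlety to check is that when several $A_j$ coincide as sets the vertex permutation $\tau$ is not uniquely determined by $\sigma$, but any such ambiguity only inflates the BFS count and therefore does not harm the upper bound.
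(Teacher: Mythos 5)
Your argument is correct and follows essentially the same route as the paper's: reduce to the connected case via standardness, order the sets of each component so that each new set meets the union of its predecessors, count at most $p_c\cdot l!\cdot\prod_k n_k!$ realizations of $\sigma$, and bound the factorial product by $(l!)^{p_c/2}$ through the same merging inequality $a!\,b!\le u!\,v!$ with mass pushed to the extremes (including the odd-$p_c$ leftover $(l/2)!\le\sqrt{l!}$). The only cosmetic difference is at the very end, where you use $p_c\le 3^{p_c/2}$ componentwise while the paper bounds $\prod_k p_k\le (p/n)^n\le e^{p/e}\le 3^{p/2}$ by AM--GM; both yield the same constant.
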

\begin{proof}
	First we suppose that $\ZA=\{A_1,A_2,\ldots,A_{p}\}\in \ZD_{l,p}^*$ is connected. Using the connectivity, one can give a special  numeration $B_1,B_2,\ldots,B_{p}$ of the elements of $\ZA$  such that
	\begin{equation}
		B_{k+1}\cap\left(\cup_{j=1}^kB_j\right)\neq\varnothing,\quad k=1,2\ldots,p-1.
	\end{equation}
	Then we denote 
	\begin{align}
		&B_1^*=B_1,\quad B^*_k=B_k\setminus \cup_{j=1}^{k-1}B_j,\quad k\ge 2,\\
		&l_k=\#(B_k^*).
	\end{align} 
	Of coarse, some of $l_k$'s can be zero and clearly we have 
	\begin{equation*}
		\sum_{k=1}^pl_k=pl/2.
	\end{equation*}
	Let $\sigma$ be a randomly chosen rearrangement, satisfying \e{a51}. Observe that it can be realized on $B_1$ at most in $p\cdot (l)!$ different possible ways. Indeed, for the $\sigma$-image of $B_1$ we have $p$ different choices, since according to \e{a51} it must coincide with one of the sets $A_1,A_2,\ldots,A_p$. Having determined the image of $B_1$, the $\sigma$ on the elements of $B_1$ can be realized at most in $l!$ different ways. So there are at most $p\cdot (l)!$ different realization of $\sigma$ on $B_1$. Suppose by induction the $\sigma$ has been already realized on $\cup_{k=1}^mB_k$. Clearly, with this we will have determined the $\sigma$-image of $B_{m+1}$ as one of the elements of $\ZA$, and  the values of $\sigma$ on each element of the set $B_{m+1}\cap\left(\cup_{k=1}^mB_k\right)$. So to finalize the determination of $\sigma$ on $B_{m+1}$ it remains only to realize $\sigma$ on $B_{m+1}^*$, for which we will have at most $(l_{m+1})!$ different choices.  Thus we conclude that the number of all possible realization of $\sigma$ satisfying \e{a51} totally does not exceed 
	\begin{equation}\label{x20}
		p\cdot \prod_{k=1}^p (l_k)!\le p\cdot (l!)^{p/2},
	\end{equation}
	we just need to show the last inequality. For this end we will consequently use the fact that any product $t!s!$ with $0\le s\le t\le l$ doesn't exceed  $u!v!$ where $0\le u\le v\le l$, $u+v=t+s$ and one of $u$ or $v$ is either $0$ or $l$. If $p$ is even, using the mentioned remark, one can see that the biggest possible value on the left of \e{x20} is reached when each $l_k$ is either $0$ or $l$. So we will have $p/2$ number of $l!$ and $p/2$ number of $0!=1$.  Hence we obtain \e{x20}. If $p$ is odd and so $l$ is even, then the biggest value of \e{x20} is reached when we have $(p-1)/2$ number of $l!$, $(p-1)/2$ number of $0!$ and a single $(l/2)!$. Thus we again get
	\begin{equation}
		p\cdot \prod_{k=1}^p (l_k)!\le p(l!)^{(p-1)/2}(l/2)!\le p\cdot (l!)^{p/2}.
	\end{equation}
	Now suppose that $\ZA=\{\ZA_1,\ldots,\ZA_s\}$ is a general standard set-collection, where $\ZA_k$ are the connectivity classes of $\ZA$. By the definition all E-classes $[\ZA_k]$ are different. Thus any $\sigma$ satisfying \e{a51} acts inside of each connectivity component $\ZA_k$, that is $\sigma(\ZA_k)=\ZA_k$. We also have $\#(\ZA_k)=p_k\ge 2$, $p_1+p_2+\ldots+p_n=p$. Thus applying estimate \e{x20} for each connectivity component, we get the following upper bound for the number of such rearrangements \e{a51}. Namely,
	\begin{equation*}
		\prod_{k=1}^np_k(l!)^{p_k/2}=	(l!)^{p/2}\prod_{k=1}^np_k.
	\end{equation*}
	On the other hand, applying the Cauchy inequality, we have
	\begin{equation*}
		\prod_{k=1}^np_k\le \left(\frac{\sum_{k=1}^np_k}{n}\right)^n=\left(\frac{p}{n}\right)^n.
	\end{equation*}
	Since $1\le n\le p/2$ and the function $(p/x)^x$ takes its maximum on $(0,p/2]$ when $x=p/e$, we conclude $\left(\frac{p}{n}\right)^n\le e^{p/e}\le 3^{p/2}$. Thus we obtain
	\begin{equation*}
		\prod_{k=1}^np_k(l!)^{p_k/2}\le (3l!)^{p/2},
	\end{equation*}
	which finishes the proof of lemma.
\end{proof}

\begin{remark}\label{R1}
	In the sequel for a $\ZA=\{A_1,\ldots,A_{p}\}\in \ZD_{l,p}^*$ the notation $[\ZA]$ will stand for the E-class of $\ZA$ inside $\ZD_{l,p}^*$. Recall that if $\ZA=\{A_1,\ldots,A_{p}\}$ and $\ZB=\{B_1,\ldots,B_p\}\in \ZD_{l,p}^*$ are isomorphic, then there is a rearrangement $\sigma\in \pi_{pl/2}$ such that
	\begin{equation*}
			\{\sigma(A_1),\ldots,\sigma(A_{p})\}=\{B_1,\ldots,B_p\}.
	\end{equation*}
	This implies $\#[\ZA]\le (pl/2)!$ for any $\ZA\in  \ZD_{l,p}$.  Besides, applying  \lem{L4}, for $\ZA\in \ZD_{l,p}^*(\text{standard})$ we have $\#[\ZA]\ge (pl/2)!/(3l!)^{p/2}$.
\end{remark}
Let $p,l\ge 2$ and $\ZA\in \ZD_{l,p}$. A set-collection $\ZA'=\{A_1, A_2,\ldots, A_{r}\}\subset \ZA$, $1\le r\le p$, is said to be X-chain in $\ZA$ if 
\begin{align*}
	&\quad r \text{ is even and }\\
	&\#(A_k\cap A_{k+1})
	=\left\{\begin{array}{rrl}
		l-1&\text{ if }& k \text{ is odd,}\\
		1&\text{ if }& k \text{ is even},
	\end{array}
	\right.
\end{align*}
$k=1,2,\ldots,r-1$.  $\ZA'$ is said to be Y-chain if 
\begin{align*}
	&\quad l \text{ is even and }\\
	&\#(A_k\cap A_{k+1})=l/2,\quad k=1,2,\ldots,r-1.
\end{align*}	
If $l$ is odd, then any single element set-collection $\ZA'=\{A\}$ will also be considered as a Y-chain.
\begin{lemma}\label{L7}
	Let $p,l\ge 2$, $1\le r\le p$ be integers and $\ZA\in \ZD_{l,p}$. Then  there are at most $p$ different X-chains (Y-chains) of length $r$ in $\ZA$.
\end{lemma}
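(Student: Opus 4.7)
The plan is to show that any X-chain or Y-chain $(A_1,\ldots,A_r)$ of length $r\ge 2$ is determined as an ordered sequence by its first two sets $(A_1,A_2)$, and then to bound the number of admissible starting pairs. Since the reversal of such a chain is itself a chain of the same type (the intersection pattern $l{-}1,1,l{-}1,\ldots,l{-}1$ for X-chains and $l/2,\ldots,l/2$ for Y-chains is palindromic), each unordered chain corresponds to two ordered chains, so the bound $\le p$ on unordered chains will follow from $\le 2p$ on ordered ones.

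The forcing step rests on the double-covering observation that any element $a\in A_i\cap A_{i+1}$ lies in exactly two sets of $\ZA$, namely $A_i$ and $A_{i+1}$, and hence cannot appear in any $A_j$ with $j\ge i+2$. For an X-chain this means the $l-1$ elements of $A_1\cap A_2$ are used up, so the unique $x_2\in A_2\setminus A_1$ is forced to lie in $A_3$ in order to achieve $\#(A_2\cap A_3)=1$; since $x_2$ lies in only one set other than $A_2$, this pins $A_3$ down. At the next step $x_2$ itself becomes used up, so the $l-1$ elements of $A_3\cap A_4$ must exhaust $A_3\setminus\{x_2\}$; each of these has a unique partner set, so they must all point to the common set $A_4$. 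Iterating alternately recovers the whole chain from $(A_1,A_2)$. The Y-chain case with $l$ even is analogous, with the $l/2$ elements of $A_2\setminus A_1$ playing the role of $x_2$ to determine $A_3$, and induction producing $A_4,A_5,\ldots$

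To count starting pairs, I would fix $A_1\in\ZA$ and partition its $l$ elements according to their partner sets; a candidate $A_2$ with $\#(A_1\cap A_2)$ equal to $l-1$ (X-chain) or $l/2$ (Y-chain) corresponds to a block of that size. Since the block sizes sum to $l$, at most one block has size $l-1$ when $l\ge 3$ and at most two blocks have size $l/2$; the case $l=2$ for X-chains likewise admits at most two candidates for $A_2$. This gives at most $2p$ ordered starting pairs, hence at most $p$ distinct chains of length $r\ge 2$. The residual case $r=1$, relevant for Y-chains and in particular whenever $l$ is odd, contributes at most $p$ singletons trivially.

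The main obstacle is a careful verification that the forcing implications never stall, i.e.\ that the $l-1$ (respectively $l/2$) elements we wish to push into $A_{k+1}$ really do share a common partner set at every step, and that the factor-of-two reversal accounting survives degenerate configurations where the chain could wrap around so that $(A_r,A_{r-1})$ matches $(A_1,A_2)$. In both places the double-covering hypothesis together with the intersection cardinalities prescribed by the X- or Y-chain definition rule out the bad cases and preserve the bound $p$.
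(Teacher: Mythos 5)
Your argument is correct, and it takes a genuinely different route from the paper's. The paper's proof is a two-line partition argument: the maximal X-chains (Y-chains) of $\ZA$ are pairwise disjoint sub-collections, every chain of length $r$ sits inside one of them as a block of consecutive sets, a maximal chain of $m$ sets contains at most $m$ such blocks, and the disjoint lengths sum to at most $p$. You instead make explicit the rigidity that the paper leaves implicit in its ``clearly they do not have a common set'': the double-covering property forces an ordered chain to be determined by its first two members, the admissible ordered starting pairs number at most $2p$ (once $A_1$ is fixed, the partner-set blocks of $A_1$ sum to $l$, so at most one block has size $l-1$ when $l\ge 3$ and at most two have size $l/2$ or size $1$ when $l=2$), and quotienting by the fixed-point-free reversal involution gives $p$. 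The verifications you flag at the end do go through: at each step the forced subset ($A_2\setminus A_1$, then $A_3\setminus\{x_2\}$, then alternately, or $A_{k}\setminus A_{k-1}$ in the Y-case) is nonempty with exactly the prescribed cardinality, so the common partner of its elements pins down the next set; and for $r\ge 2$ a chain and its reversal always give two distinct ordered starting pairs because $A_1$ and $A_r$ are distinct members of the sub-collection. Your version costs more bookkeeping (the factor-of-two accounting) but supplies exactly the uniqueness-of-extension argument one would need to justify the paper's disjointness claim for maximal chains, so it is arguably the more self-contained of the two.
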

\begin{proof}
	Consider all maximal X-chains (Y-chains) of $\ZA$. Clearly, they do not have a common set of the collection $\ZA$. Any X-chain (Y-chain) of length $r$ must be included in one of such a maximal chain. All this easily imply that the number of X-chains (Y-chains) of length $r$ in $\ZA$ do not exceed $p$. 
\end{proof}
\begin{lemma}\label{L6}
	Let $p>q\ge 2$ and $l\ge 2$ be integers such that both $pl$ and $ql$ are even. Then there is a mapping 
	\begin{equation}
		\phi: \ZD_{l,q}(\text{connected})\to \ZD_{l,p}(\text{connected}),
	\end{equation} 
	satisfying $\#(\phi^{-1}(\ZB))\le p$ for any $\ZB\in \ZD_{l,p}(\text{connected})$. 
\end{lemma}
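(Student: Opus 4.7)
The plan is to define $\phi$ by inserting a chain of suitable type and length into $\mathcal{A}$, and to bound the preimage count by invoking \lem{L7}. The type of chain depends on parity. When $p-q$ is even, I will insert an X-chain of length $p-q$ without removing any set from $\mathcal{A}$. When $p-q$ is odd, the hypothesis that both $pl$ and $ql$ are even forces $l$ to be even, and I will instead remove one set of $\mathcal{A}$ and replace it with a Y-chain of length $p-q+1$.

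Concretely, in the X-case, given $\mathcal{A}=\{A_1,\ldots,A_q\}$ I would fix the smallest-indexed element $a\in\cup_k A_k$, let $A_i,A_j$ be the two sets containing $a$, introduce a fresh element $b$ from $\{ql/2+1,\ldots,pl/2\}$, replace $A_j$ by $A_j'=(A_j\setminus\{a\})\cup\{b\}$, and adjoin an X-chain $C_1,\ldots,C_{p-q}$ built from the remaining new elements with $a$ unique to $C_1$ and $b$ unique to $C_{p-q}$. The resulting collection has $p$ sets of size $l$; a direct verification shows each element is covered exactly twice (the endpoints $a,b$ once by the chain and once by $A_i,A_j'$; interior chain elements twice within the chain; everything else unaffected) and that the new collection is connected (the chain attaches to the rest through $A_i$ and $A_j'$), while an element count shows the $(p-q)l/2$ needed fresh elements fit exactly into $\{ql/2+1,\ldots,pl/2\}$. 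In the Y-case, I would instead pick the smallest-indexed set $A_i$, split $A_i=D_0\cup D_r$ into two halves of size $l/2$ with $r=p-q+1$, and replace $A_i$ by the Y-chain $C_k=D_{k-1}\cup D_k$ ($k=1,\ldots,r$) where $D_1,\ldots,D_{r-1}$ are pairwise disjoint $l/2$-element sets of fresh elements; connectivity is preserved because each former neighbor of $A_i$ remains linked to $C_1$ or $C_r$ through the halves $D_0,D_r$.

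For the preimage bound, observe that each preimage $\mathcal{A}\in\phi^{-1}(\mathcal{B})$ singles out a unique X-chain (resp. Y-chain) of length $p-q$ (resp. $p-q+1$) in $\mathcal{B}$, namely the chain inserted to form $\phi(\mathcal{A})$. The reverse operation — contracting the X-chain back to a single element, or merging the two endpoint halves of the Y-chain into one set — is uniquely determined by the chain, so the assignment $\mathcal{A}\mapsto(\text{its distinguished chain in }\mathcal{B})$ is injective. By \lem{L7} the number of X-chains (resp. Y-chains) of a fixed length in $\mathcal{B}$ is at most $p$, whence $\#(\phi^{-1}(\mathcal{B}))\le p$, as required. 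The main technical obstacle is arranging the canonical choices in the insertion so that this reverse contraction is genuinely unambiguous; once that is set up, handling the two parity cases separately is a matter of bookkeeping.
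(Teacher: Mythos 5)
Your proposal is correct and follows essentially the same route as the paper: extend a connected $(l,q)$-double-covering to a connected $(l,p)$-double-covering by grafting in an X-chain or a Y-chain (the paper splits cases by the parity of $l$, you by the parity of $p-q$, but the constructions are the same up to bookkeeping), and then bound $\#(\phi^{-1}(\ZB))$ by the number of chains of the relevant length in $\ZB$ via \lem{L7}. Your added remarks on making the insertion canonical and the contraction unambiguous only make explicit what the paper leaves implicit.
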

\begin{proof} 
First suppose that $l$ is odd and so $p-q$ is even. 
	Let $\ZA=\{A_1, A_2,\ldots, A_q\}\in \ZD_{l,q}(\text{connected})$. Chose $a\in A_q$ and $b\in \ZN_{pl/2}\setminus \cup_{k=1}^qA_k$ arbitrarily and define 
	\begin{equation}
		\bar A_q=(A_q\setminus \{a\})\cup \{b\}.
	\end{equation}
Since $p-q$ is even, one can define a X-chain $\{A_{q+1},\ldots,A_p\}$ such that $a\in A_{q+1}$, $b\in A_p$ and
	\begin{equation}\label{x29}
		\ZB=\{A_1,\ldots, \bar A_q,A_{q+1},\ldots,A_p\}\in \ZD_{l,p}(\text{connected}). 
	\end{equation}
Then we define $\phi(\ZA)=\ZB$. Perhaps, $\ZB$ can be the image of different double-coverings from $\ZD_{l,q}(\text{connected})$. One can see that the number of such double-coverings can not exceed the number of  X-chains of $p-q$ length in $\ZB$. So according to \lem{L7} we will always have $\#(\phi^{-1}(\ZB))\le p$.
	
	In the case of even $l$ we treat analogously. At this time we chose arbitrary subsets $A\subset A_q$ and $B\subset \ZN_{pl/2}\setminus \cup_{k=1}^qA_k$ with $\#(A)=\#(B)=l/2$ and define 
	\begin{equation*}
		\bar A_q=(A_q\setminus A)\cup B.
	\end{equation*}
Then we define a $l/2$-chain $\{A_{q+1},\ldots,A_p\}$ with $A\subset  A_{q+1}$, $B\subset  A_p$ satisfying \e{x29}.
The rest of the proof is the same as in the previous case.
\end{proof}

\begin{proof}[Proof of \trm{P0}]
We can suppose that $l>2$. To prove the left inequality in \e{x24}, first, let us first show that 
	\begin{equation}\label{b2}
		\mu_{l,p}(\text{full})\gtrsim p^{p(l/2-1)}.
	\end{equation}
	For any $\ZA\in \ZD_{l,p}(\text{full})$ we have $\#[\ZA]\le (pl/2)!$ (see \rem{R1}). So, using also \lem{L0}, we obtain
	\begin{equation*}
		\mu_{l,p}(\text{full})\ge \frac{\#(\ZD_{l,p}(\text{full}))}{(pl/2)!}\gtrsim \frac{p^{p(l-1)}}{(pl/2)!}\gtrsim p^{p(l/2-1)}.
	\end{equation*}
So it is enough to show that
	\begin{equation}\label{x21}
		\mu_{l,p}(\text{full})=\#[\ZD_{l,p}(\text{full})]\lesssim \#[\ZD_{l,p}(\text{standard})].
	\end{equation}
	Let $\ZA=\{\ZA_1,\ZA_2,\ldots,\ZA_s\}\in \ZD_{l,p}(\text{full})$, where $\ZA_k$ are the connected components of $\ZA$ and some of those can be isomorphic. If a group of components are isomorphic, then we remove all components from the group, keeping one of them. Acting so with all the groups of isomorphic components we will get a standard double-covering  $\ZB=\{\ZB_1,\ldots,\ZB_m\}$ with less connected components than $\ZA$ had. We call $\ZB$ the $\phi$-transformation of $\ZA$, which is uniquely determined up to isomorphism. Observe that $\ZB$ can be the $\phi$-transformation of at most $p\cdot 2^p$ isomorphically different elements of $\ZD_{l,p}(\text{non-standard})$, namely,
	\begin{equation}\label{x39}
		\#[\phi^{-1}(\ZB)]\le p\cdot 2^p.
	\end{equation}
Indeed, let  $\ZB=\{\ZB_1,\ldots,\ZB_m\}$ be the $\phi$-transformation of a $\ZA\in \ZD_{l,p}(\text{non-standard})$. According to the definition of $\phi$-transformation, $\ZA$ should consist of connectivity components each of which is isomorphic to one of the connectivity components of $\ZB$. So we suppose that $\ZA$ has $p_k$ components isomorphic to $\ZB_k$. Obviously we have $p_1+\ldots+p_m\le p$. One can check that the number of possible ordered-collections $(p_1,\ldots,p_m)$ of natural numbers, satisfying this inequality doesn't exceed $p\cdot 2^p$. This yields \e{x39}.

Apply another transformation to $\ZB$ as follows. Chose a connected component in $\ZB$ with a maximal number of elements. Suppose such a component is $\ZB_1$. Applying \lem{L6}, we can replace $\ZB_1$ by a larger connected double-covering $\ZB'_1$ such that $\ZB'=\{\ZB'_1,\ZB_2\ldots, \ZB_m\}\in \ZD_{l,p}(\text{standard})$ and $\ZB'_1,\ZB_2\ldots, \ZB_m$ are the connected components of $\ZB'$. Call $\ZB'$ the $\psi$-transformation of $\ZB$. According to \lem{L6}, one can easily see that $\ZB'$ can be the $\psi$-transformation of at most $p$ double-coverings. Thus, combining also \e{x39}, the superposition $\tau=\psi\circ \phi$ defines a mapping from $\ZD_{l,p}(\text{full})$ into $\ZD_{l,p}(\text{standard})$ such that $\#[\tau^{-1}(\ZB')]\le p^2\cdot 2^p$. This proves 
\begin{equation*}
\#[\ZD_{l,p}(\text{full})]\le p^22^p \#[\ZD_{l,p}(\text{standard})]
\end{equation*}
that implies \e{x21}. Combining also \e{b2}, so we get the left inequality in \e{x24}.
	
		To prove the right inequality of \e{x24}, recall (see \rem{R1}), that for any $\ZA\in \ZD_{l,p}^*(\text{connected})$ we have $\#[\ZA]\ge(pl/2)!/(3l!)^{p/2}$. So, applying \lem{L0}, we conclude
	\begin{equation}\label{a53}
		\#[\ZD_{l,p}^*(\text{connected})]\le 	\frac{(3l!)^{p/2}\cdot \#(\ZD_{l,p}^*(\text{connected}))}{(pl/2)!}\le (d(l))^{p} (p!)^{l/2-1},
	\end{equation}
	where $d(l)$ is a constant depending on $l$ and we can suppose that 
	\begin{equation}\label{x25}
		d(l)>\max_{2\le p\le 100}\#[\ZD_{l,p}^*(\text{full})].
	\end{equation}
	Applying induction on $p$,  we will prove the bound
	\begin{equation}\label{x31}
		\#[\ZD_{l,p}^*(\text{full})]\le 2(d(l))^{p} (p!)^{l/2-1}.
	\end{equation} 
	By \e{x25} it holds if $2\le p\le 100$, so it is enough to consider the case of $p>100$. Hence suppose $q>100$ and we have already proved the estimate for all $p<q$. To prove it for $q$ we estimate the number of connected and non-connected double-coverings separately. For the connected coverings we have the estimate \e{a53}.
Then, using the induction assumption, we obtain
	\begin{align*}
		\#[\ZD_{l,q}^*(\text{non-connected)}]&\le \sum_{2\le k\le q/2}[\ZD_{l,k}^*(\text{full)}]\cdot [\ZD_{l,q-k}^*(\text{full})]\\
		&\le 4(d(l))^{q}\sum_{2\le k\le q/2} (k!(q-k)!)^{l/2-1}.
	\end{align*}
	Clearly we have $k!(q-k)!\le 6(q-3)!$ if $3\le k\le q/2$. Thus we get
	\begin{align}
		&[\ZD_{l,q}^*(\text{non-connected)}]\\
		&\qquad\le 4(d(l))^{q}\left((2(q-2)!)^{l/2-1}+\sum_{3\le k\le q/2} (k!(q-k)!)^{l/2-1} \right)\\
		&\qquad \le 4(d(l))^{q}\left((2(q-2)!)^{l/2-1}+\frac{q}{2}\cdot (6(q-3)!)^{l/2-1}\right)\\
		&\qquad \le (d(l))^{q}(q!)^{l/2-1},\label{a54}
	\end{align}
where the last estimate roughly follows from $q>100$ and $l\ge 3$. From \e{a53} and \e{a54} we obtain \e{x31} for $p=q$. This completes the proof of theorem.
\end{proof}

\section{Proof of \trm{T1}}\label{S3}

Along with estimate \e{x24}, we will need also two more lemmas. The first is a generalization of the classical Cauchy–Schwarz inequality, which is of its own interest. Let $G=\{j_1,j_2,\ldots,j_n\}$ be a set of variables. We call $G$-sequence a finitely supported multiple numerical sequence $b_G=b_{j_1,\ldots, j_n}$, where each $j_k$ take natural values independently. For the sake of further convenience, we denote 
\begin{equation*}
	\sideset{}{'}\sum_{G}\,b_G=\sum_{j_1,j_2,\ldots,j_n\in G}b_{j_1,j_2,\ldots,j_n}.
\end{equation*}
\begin{lemma}\label{L10}
	Let $I_k$, $k=1,2,\ldots, p$, be sets of variables such that $\{I_1,\ldots,I_p\}$ is an even-covering and let $G=\cup_{j=1}^pI_j$. Then for any positive numerical $I_k$-sequences $a^{(k)}_{I_k}$, $k=1,2,\ldots,p$ we have
	\begin{equation}\label{a6}
		\sideset{}{'}\sum_{G}a^{(1)}_{I_1}\ldots a^{(p)}_{I_{p}}\le \prod_{k=1}^{p}\left(\sideset{}{'}\sum_{I_k} \left(a^{(k)}_{I_{k}}\right)^2\right)^{1/2}.
	\end{equation}
\end{lemma}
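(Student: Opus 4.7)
My plan is to argue by induction on $N = |G|$, the total number of variables. The base case $N=0$ is immediate since each $I_k$ is empty, each $a^{(k)}$ is a positive scalar, and both sides of \e{a6} collapse to $\prod_{k=1}^p a^{(k)}$.

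For the inductive step I would pick any $j \in G$ and set $K = \{k : j \in I_k\}$; by the even-covering hypothesis $|K| = 2m$ for some $m \ge 1$. Fixing the remaining variables and summing over $j$ first, the key auxiliary step is the single-variable estimate
\begin{equation*}
\sum_j f_1(j)\cdots f_r(j) \le \prod_{i=1}^{r}\bigg(\sum_j f_i(j)^2\bigg)^{1/2},\qquad r\ge 2,
\end{equation*}
valid for nonnegative sequences $f_i$. For $r=2$ this is Cauchy-Schwarz; for $r \ge 3$ one first applies Cauchy-Schwarz to get $\sum_j f_1\cdots f_r \le (\sum_j f_1^2)^{1/2}(\sum_j f_2^2\cdots f_r^2)^{1/2}$ and then uses the pointwise bound $|f(j_0)| \le (\sum_j f(j)^2)^{1/2}$, i.e.\ $\|f\|_\infty \le \|f\|_2$, to peel the remaining factors off the $L^2$-norm one at a time. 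Applied to the inner sum over $j$, this produces
\begin{equation*}
\sideset{}{'}\sum_G\prod_{k=1}^p a^{(k)}_{I_k} \le \sideset{}{'}\sum_{G\setminus\{j\}}\prod_{k=1}^p \bar a^{(k)}_{\bar I_k},
\end{equation*}
where $\bar I_k = I_k\setminus\{j\}$ and $\bar a^{(k)}_{\bar I_k}=(\sum_j (a^{(k)}_{I_k})^2)^{1/2}$ for $k\in K$, while $\bar I_k = I_k$ and $\bar a^{(k)}=a^{(k)}$ otherwise. Removing $j$ from exactly $2m$ sets leaves $\{\bar I_1,\ldots,\bar I_p\}$ an even-covering on $G\setminus\{j\}$, so the induction hypothesis applies; the Fubini identity $\sideset{}{'}\sum_{\bar I_k}(\bar a^{(k)}_{\bar I_k})^2 = \sideset{}{'}\sum_{I_k}(a^{(k)}_{I_k})^2$ for $k \in K$ then matches the induction bound to the right-hand side of \e{a6}.

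The main obstacle is the single-variable estimate for $r\ge 3$: it is strictly weaker than H\"older's inequality (which would produce $L^r$ norms on the right) and relies on the pointwise $L^\infty$-versus-$L^2$ bound to stay inside the $L^2$-category while factors are removed one at a time. This is precisely why the even-covering hypothesis enters the statement: the $2m$ occurrences of each variable contribute exactly $2m$ paired $L^2$-factors that recombine correctly after elimination, whereas an odd number of occurrences would force the (generally false) bound $\|f\|_1 \le \|f\|_2$ at some stage of the induction.
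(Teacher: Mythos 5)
Your proof is correct, but it is organized differently from the paper's. The paper first proves \e{a6} for double-coverings, by induction on $\#(G)$: there one picks a pair $I_1,I_2$ with $I_1\cap I_2\neq\varnothing$, notes that the double-covering condition forces $I_j\cap(I_1\cap I_2)=\varnothing$ for $j\ge 3$, and eliminates the whole block $I_1\cap I_2$ in one application of the ordinary two-factor Cauchy--Schwarz inequality; the general even-covering case is then handled by a separate preliminary reduction, in which a variable shared by four or more sets is split by introducing a fresh variable, the original sum being dominated by the new one. You instead run the induction on even-coverings directly, eliminating a single variable $j$ at a time, which forces you to prove the $r$-factor estimate $\sum_j f_1\cdots f_r\le\prod_{i}\bigl(\sum_j f_i^2\bigr)^{1/2}$ for $r\ge 2$; your derivation of it (one Cauchy--Schwarz followed by repeated use of $\|f\|_\infty\le\|f\|_2$) is valid, and your observations that deleting $j$ preserves the even-covering property on $G\setminus\{j\}$ and that the Fubini identity restores the full $\ell^2$ norms are exactly what is needed to close the induction. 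Your remark that evenness is what saves you from the false bound $\|f\|_1\le\|f\|_2$ correctly identifies where the hypothesis enters. The trade-off: your argument is more uniform and avoids the paper's variable-splitting reduction entirely, at the cost of the auxiliary $r$-factor inequality; the paper's argument uses nothing beyond classical Cauchy--Schwarz at each step but must quarantine the even-versus-double issue in a separate combinatorial domination step. (In both treatments one should note, as a formality, that sets of variables may become empty during the induction, in which case the corresponding primed sum degenerates to a single term; this affects neither argument.)
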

\begin{proof}
First suppose that $\{I_1,\ldots,I_p\}$ is a double-covering. Apply induction with respect to $n=\#(G)$. If $n=1$ , then \e{a6} gives the classical Cauchy–Schwarz inequality. Assume the inequality holds for all $n< m$ and let us prove it for $n=m$. Hence we let $\#(G)=m$ and variable sets $I_k\subset G$, $k=1,2,\ldots, p$, form a double-covering and $G=\cup_kI_k$. Suppose two of these sets, say $I_1$ and $I_2$, have nonempty intersection, $I_1\cap I_2\neq\varnothing$. According to the double-covering condition we have $I_j\cap(I_1\cap I_2)=\varnothing$ for all $j\ge 3$. Thus, using the classical Cauchy–Schwarz inequality, we can write
	\begin{align}
		&\sideset{}{'}\sum_{G}a^{(1)}_{I_1}\ldots a^{(p)}_{I_p}\\
		&\qquad =\sideset{}{'}\sum_{G\setminus (I_1\cap I_2)}a^{(3)}_{I_3}\ldots a^{(p)}_{I_p}\sideset{}{'}\sum_{I_1\cap I_2}a^{(1)}_{I_1}a^{(2)}_{I_2}\\
		&\qquad\le\sideset{}{'}\sum_{G\setminus (I_1\cap I_2)}a^{(3)}_{I_3}\ldots a^{(p)}_{I_p}\left(\sideset{}{'}\sum_{I_1\cap I_2}|a^{(1)}_{I_1}|^2\right)^{1/2}\left(\sideset{}{'}\sum_{I_1\cap I_2}|a^{(2)}_{I_2}|^2\right)^{1/2}\\	
		&\qquad=\sideset{}{'}\sum_{G\setminus (I_1\cap I_2)}a'_{I_1\setminus I_2}\cdot a''_{I_2\setminus I_1}\cdot a^{(3)}_{I_3}\ldots a^{(p)}_{I_p},\label{a5}\\ 
	\end{align}
	where 
	\begin{equation}\label{x42}
		a'_{I_1\setminus I_2}=\left(\sideset{}{'}\sum_{I_1\cap I_2}|a^{(1)}_{I_1}|^2\right)^{1/2},\quad a''_{I_2\setminus I_1}=\left(\sideset{}{'}\sum_{I_1\cap I_2}|a^{(2)}_{I_2}|^2\right)^{1/2}.
	\end{equation}
	Observe that $\{I_1\setminus I_2, I_2\setminus I_1,I_3,\ldots,I_{p}\}$ is double-covering, whose base is $G'=G\setminus (I_1\cap I_2)$ and $\#(G')<m$.
	So using the induction assumption and notation \e{x42},  we obtain
	\begin{align*}
		\sideset{}{'}\sum_{G\setminus (I_1\cap I_2)}&a'_{I_1\setminus I_2}\cdot a''_{I_2\setminus I_1}\cdot a^{(3)}_{I_3}\ldots a^{(p)}_{I_p}\\
		&\le \left(\sideset{}{'}\sum_{I_1\setminus I_2}|a'_{I_1\setminus I_2}|^2\right)^{1/2} \left(\sideset{}{'}\sum_{I_2\setminus I_1}|a''_{I_2\setminus I_1}|^2\right)^{1/2}\\
		&\qquad\qquad\times\left(\sideset{}{'}\sum_{I_3}|a^{(3)}_{I_3}|^2\right)^{1/2}\ldots
		\left(\sideset{}{'}\sum_{I_{p}}|a^{(p)}_{I_p}|^2\right)^{1/2}=\prod_{k=1}^{p}\left(\sideset{}{'}\sum_{I_k} |a^{(k)}_{I_k}|^2\right)^{1/2}.
	\end{align*} 
Now suppose that $\{I_1,\ldots,I_p\}$ is an arbitrary even covering of variable sets and four of them say $I_1,I_2,I_3,I_4$ have a common element $k$. Chose an arbitrary variable  $k'\not\in \cup_{j=1}^pI_j$ and replace the sets $I_1$ and $I_2$ by $\tilde I_1=(I_1\setminus \{k\})\cup \{k'\}$ and $\tilde I_2=(I_2\setminus \{k\})\cup \{k'\}$ respectively. One can check that
\begin{equation}
	\sideset{}{'}\sum_{G}a^{(1)}_{I_1}a^{(2)}_{I_1}a^{(3)}_{I_3}\ldots a^{(p)}_{I_{p}}\le	\sideset{}{'}\sum_{G\cup \{k'\}}a^{(1)}_{\tilde I_1}a^{(2)}_{\tilde I_2}a^{(3)}_{ I_3}\ldots a^{(p)}_{I_{p}},
\end{equation}
since the first sum is a part of the second sum. Applying this procedure consecutively, finally we will get a dominating sum with a double-covering collection of variable sets. This will reduce the general case of the inequality to the case of double-covering,
completing the proof of lemma.
\end{proof}
\begin{lemma}\label{L1}
	If $\ZA=\{A_1,A_2,\ldots,A_{p}\}$ is an even-covering, then there exists a double-covering $\ZB=\{B_1,B_2,\ldots,B_{p}\}$ and an onto mapping
	\begin{equation*}
		\phi:\bigcup_{k=1}^{p}B_k\to\bigcup_{k=1}^{p}A_k
	\end{equation*}
	such that $\phi(B_k)=A_k$, $k=1,2,\ldots, p$.
\end{lemma}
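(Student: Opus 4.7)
The plan is to build $\ZB$ by ``splitting'' each element $a\in\cup_k A_k$ into as many fresh labeled copies as it has pairs of occurrences in $\ZA$, and then to arrange these copies into the new sets so that each one lies in exactly two of them. Concretely, for each $a\in \cup_{k=1}^p A_k$ set $K(a)=\{k:a\in A_k\}$; the even-covering hypothesis gives $\#(K(a))=2m(a)$. Enumerate $K(a)=\{k_1(a)<k_2(a)<\cdots<k_{2m(a)}(a)\}$ and introduce formal new symbols $a^{(1)},\ldots,a^{(m(a))}$, intuitively identifying $a^{(j)}$ with the consecutive pair $\{k_{2j-1}(a),k_{2j}(a)\}$. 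Let $U=\{a^{(j)}:a\in\cup_kA_k,\ 1\le j\le m(a)\}$ be the new base, and define
\begin{equation*}
B_k=\{a^{(j)}\in U : a\in A_k,\ k\in\{k_{2j-1}(a),k_{2j}(a)\}\},\qquad \phi(a^{(j)})=a.
\end{equation*}

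Next I would verify the three required properties. For the double-covering property, a given symbol $a^{(j)}$ belongs to $B_k$ iff $k\in\{k_{2j-1}(a),k_{2j}(a)\}$, so it lies in exactly two of the $B_k$'s; hence $\{B_1,\ldots,B_p\}$ is a double-covering with base $U=\cup_{k=1}^pB_k$. Surjectivity of $\phi$ onto $\cup_{k=1}^pA_k$ is immediate from the definition. To show $\phi(B_k)=A_k$, observe that every element of $B_k$ is some $a^{(j)}$ with $a\in A_k$, so $\phi(B_k)\subset A_k$; conversely, for each $a\in A_k$ the index $k$ appears exactly once in the ordered list $k_1(a)<\cdots<k_{2m(a)}(a)$, so there is a unique $j$ with $k\in\{k_{2j-1}(a),k_{2j}(a)\}$, and the corresponding $a^{(j)}$ lies in $B_k$ with $\phi(a^{(j)})=a$.

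A useful byproduct of this last observation is that the restriction $\phi|_{B_k}\colon B_k\to A_k$ is a bijection, so $\#(B_k)=\#(A_k)$ for all $k$; this matches the cardinality requirement needed when the lemma is applied to an even-covering whose members all have size $l$. There is no real obstacle here: the entire argument is a careful bookkeeping of occurrences, and the ``consecutive pairing within the ordering of $K(a)$'' is exactly what reduces the even-covering condition to the double-covering condition. Any other pairing of the $2m(a)$ occurrences of $a$ into $m(a)$ disjoint pairs would work equally well.
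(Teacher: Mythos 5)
Your proof is correct and is essentially the paper's argument: both split each point covered $2m$ times into $m$ fresh copies, each covered exactly twice, and map the copies back to the original point. The only difference is that you give the splitting in one explicit closed form (via consecutive pairs in an ordering of the occurrence set), whereas the paper performs it iteratively, peeling off one new pair at a time; your version also makes explicit the useful fact that $\#(B_k)=\#(A_k)$, which the paper leaves implicit.
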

\begin{proof}
	We define $\ZB$ inductively, "transforming" the sets of $\ZA$ as follows. Suppose there are four sets of $\ZA$ say $A_1,A_2,A_3,A_4$ having a common point $a$. Chose an arbitrary point  $b\not\in \cup_{j=1}^pA_j$, then replace the sets $A_1$ and $A_2$ by $(A_1\setminus \{a\})\cup \{b\}$ and $(A_2\setminus \{a\})\cup \{b\}$ respectively. Then we will say that $b$ is generated from $a$.  Applying this procedure consecutively, we will finally transform $\ZA$ to a double-covering $\ZB$. For $a\in \cup_k A_k$ denote by $[a]$ the union of $a$ with all the points of $\cup_kB_k$ that are generated from $a$. Define the mapping $\phi$, assigning to each point $b\in \cup_kB_k$ to $a$ whenever $b\in [a]$. Obviously it satisfies the requirements of the lemma.
\end{proof}

Define a virtual function $\gamma(a_1,a_2,\ldots,a_p)$, where $a_k$ can be either integers or sets. If all $a_k$ are different, then $\gamma(a_1,a_2,\ldots,a_p)=p!$. Otherwise grouping the equal elements together, we can split the collection $\{a_1,a_2,\ldots,a_p\}$ into groups of $p_k$ elements, such that $p_1+p_2+\ldots+p_m=p$. In that case we define
\begin{equation}\label{x32}
	\gamma(a_1,a_2,\ldots,a_p)=\frac{p!}{(p_1)!(p_2)!\ldots (p_m)!}.
\end{equation}
Observe that if $\ZA=\{A_1,\ldots,A_{p}\}$ is a double-covering, then 
\begin{equation}\label{x44}
	\gamma(A_1,A_2,\ldots,A_{p})\ge \frac{p!}{2^p},
\end{equation}
since at most two sets of $\ZA$ can be equal each other. Here is another well-known use of the function $\gamma$ in the polynomial formula
\begin{equation}\label{x43}
	(x_1+x_2+\ldots+x_n)^p=\sum_{1\le k_1,\ldots,k_p\le n}\gamma(k_1,k_2,\ldots,k_p)x_{k_1}x_{k_2}\ldots x_{k_p}.
\end{equation}
\begin{proof}[Proof pf \trm{P1}]
First we prove the right side inequality in \e{x24}. Let $\{b_A:\, A\in \zZ_l\}$ be a finitely supported sequence. Using \e{x43}, we get
	\begin{align}
		&\int_0^1\left(\sum_{A\in \zZ_l}	b_Aw_A\right)^{p}\nonumber\\
		&\qquad = \sum_{A_1,\ldots,A_{p}\in \zZ_l}\gamma(A_1,\ldots,A_{p})b_{A_1}\ldots b_{A_p}\int_0^1w_{A_1}\ldots w_{A_p}.\label{x30}
	\end{align}
	Observe that if the set-collection $\{A_1,\ldots,A_{p}\}$ is not an even-covering, then the integral in \e{x30} is zero, 
	otherwise it is $1$. Thus we conclude
	\begin{align}
		\int_0^1\left(\sum_{A\in \zZ_l}	b_Aw_A\right)^{p}&=\sum_{\stackrel{A_1,\ldots,A_{p}\in \ZA_{l}}{ \{A_1,\ldots,A_{p}\}\text { is even}}}\gamma(A_1,A_2,\ldots,A_{p})b_{A_1}\ldots b_{A_{p}}\label{a9}\\
		&\le \sum_{\stackrel{A_1,\ldots,A_{p}\in \ZA_{l}}{ \{A_1,\ldots,A_{p}\}\text { is even}}}\gamma(A_1,A_2,\ldots,A_{p})|b_{A_1}\ldots b_{A_{p}}|.\label{x51}
	\end{align}
	From this moment we define $b_{j_1,j_2,\ldots,j_l}=0$ if at least two of $j_k$ are equal. Observe that any term $|b_{A_1}\ldots b_{A_{p}}|$ with an even-covering $\ZA=\{A_1,\ldots,A_{p}\}$ is included at least in one of the prime-sum
	\begin{equation}\label{a8}
		\sideset{}{'}\sum_{\cup_{j=1}^{p}I_j}|b_{I_1}\ldots b_{I_{p}}|,
	\end{equation}
	where the variable set-collection $I=\{I_1,\ldots,I_{p}\}$ forms a double-covering with $\#(I_k)=l$. Indeed, suppose $\{A_1,\ldots,A_{p}\}$ is an even-covering of naturals. According to \lem{L1} there is a variable double-covering $\{I_1,\ldots,I_{p}\}$ and an onto mapping 
	\begin{equation*}
		\phi:\bigcup_{k=1}^{p}I_k\to\bigcup_{k=1}^{p}A_k
	\end{equation*}
	such that $\phi(I_k)=A_k$. So, if we assign to any variable $j\in \cup_kI_k$ in $|b_{I_1}\ldots b_{I_{p}}|$ the value $\phi(j)\in \ZN$, we will get the term $|b_{A_1}\ldots b_{A_{p}}|$ in \e{a8}. This remark and \e{x32} yield 
	\begin{align}
		\sum_{\stackrel{A_1,\ldots,A_{p}\in \ZA_{l}}{ \{A_1,\ldots,A_{p}\}\text { is even}}}\gamma(A_1,\ldots,A_{p})|b_{A_1}\ldots b_{A_p}|\le p! \sum_{[I_1,\ldots,I_{p}]}\sideset{}{'}\sum_{\cup_{j=1}^{p}I_j}|b_{I_1}\ldots b_{I_p}|,\label{a10}
	\end{align}
	where the first summation is taken over all isometrically different variable double-coverings $\{I_1,\ldots,I_{p}\}$ with $\#(I_k)=l$. Recall that the number of such double-coverings is $\mu_{l,p}(\text{full})$. On the other hand, applying \lem{L10} with sequences $a^{(k)}_{I_k}=b_{I_k}$, $k=1,2,\ldots,p$, we obtain 
	\begin{equation}\label{a11}
		\sideset{}{'}\sum_{\cup_{j=1}^{p}I_j}|b_{I_1}\ldots b_{I_p}|\le  \left(\sideset{}{'}\sum_{I_1}(b_{I_1})^2\right)^{p/2}= \left(l!\sum_{A\in \zZ_l}(b_A)^2\right)^{p/2}.
	\end{equation}
	Thus, combining \e{x51}, \e{a10} and \e{a11}, we obtain 
	\begin{equation}
	\left\|\sum_{A\in \zZ_l}	b_Aw_A\right\|_p\le \sqrt{l!}\cdot (p!\mu_{l,p}(\text{full}))^{1/p}\left(\sum_{I\in \zZ_l}|b_I|^2\right)^{1/2},
	\end{equation}
 which gives the right side inequality in \e{x33}.
To prove of the left side inequality consider the finite sum
\begin{equation}\label{x52}
	\sum_{A\in \zZ_l(m)} w_A,
\end{equation}
where
\begin{equation*}
	\zZ_l(m)=\{A\subset \ZN_m=\{1,2,\ldots,m\}:\, \#(A)=l\}\subset \zZ_l. 
\end{equation*}
We have
\begin{align}
	\int_0^1&\left(\sum_{A\in \zZ_l(m)} w_A\right)^{p}\\
	&=\sum_{A_1,\ldots,A_{p}\in \zZ_l(m)}\gamma(A_1,A_2,\ldots,A_{p})\int_0^1w_{A_1}w_{A_2}\ldots w_{A_p}\\
	&=\sum_{\stackrel{A_1,\ldots,A_{p}\in \zZ_{l}(m)}{ \{A_1,\ldots,A_{p}\}\text { is even}}}\gamma(A_1,A_2,\ldots,A_{p})\\
	&\ge \sum_{\stackrel{A_1,\ldots,A_{p}\in \zZ_{l}(m)}{ \{A_1,\ldots,A_{p}\}\text { is standard double-covering}}}\gamma(A_1,A_2,\ldots,A_{p}).\label{x34}
\end{align}
Let $\zI=\{I_1,\ldots,I_{p}\}$ be a standard double-covering of variables such that $\#(I_k)=l$. Realizing the variables of $\cup_{j=1}^pI_k$ on $\ZN_m$ independently, the collection $\zI$ will generate all the standard double-coverings $ \ZA=\{A_1,\ldots,A_{p}\}$ with $A_k\in \zZ_l(m)$ that are isomorphic to $\zI$. According to \lem{L4}, each such $\ZA$ can meet in these realizations at most $(3l!)^{p/2}$ time. 
Thus, applying \e{x44} and continuing estimation \e{x34}, we get
\begin{align}
		\int_0^1\left(\sum_{A\in \zZ_l(m)} w_A\right)^{p}&\ge \frac{1}{(3l!)^{p/2}}\sum_{[I_1,\ldots,I_{p}]}\quad \sideset{}{'}\sum_{\cup_{j=1}^{p}I_j\subset \ZN_m}\gamma(I_1,I_2,\ldots,I_{p})\label{x54}\\
		&\ge \frac{p!}{2^{p}(3l!)^{p/2}}\cdot \mu_{l,p}(\text{standard})A^{pl/2}_m,\label{x53}
\end{align}
where the first summation is taken over all the isomorphically different standard double-covering of variables such that $\#(I_k)=l$. So the quantity $\mu_{l,p}(\text{standard})$ in \e{x53} stands for the number of such coverings, and
\begin{equation*}
	A^{pl/2}_m=m(m-1)\ldots(m-pl/2+1)
\end{equation*}
is the number of the terms in the second sum of \e{x54}. The $l^2$ norm of the coefficients of sum \e{x52} is exactly $\binom{m}{l}^{1/2}$
and we have
\begin{align*}
	\frac{\left\|\sum_{A\in \zZ_l(m)} w_A\right\|_{p}}{\binom{m}{l}^{1/2}}\ge  \frac{\left(p!\cdot \mu_{l,p}(\text{standard})A_m^{pl/2}\right)^{1/p}}{2\sqrt{3l!}\cdot \binom{m}{l}^{1/2}}\to \frac{\left(p!\mu_{l,p}(\text{standard})\right)^{1/p}}{2\sqrt{3}}
\end{align*}
as $m\to\infty$. This implies the left inequality in \e{x33}.
\end{proof}

\section{Open questions}
In this section we state some open problems that are interesting from the combinatorial point of view. Consider quantities
\begin{align}
	&d(l)=\sup_{p\ge 2}\frac{(\mu_{l,p}(\text{full}))^{1/p}}{p^{l/2-1}},\\
	&g(l)=\sup_{p\ge 2}\frac{\sup_{\|b\|\le 1}	\left\|\sum_{A\in \zZ_l}b_Aw_A\right\|_p}{p^{l/2}},
\end{align}
where the first one has a pure combinatorial character, while the second describes the sharp Khintchine constant in \e{x1}. From \e{x1} and \e{x24} it follows that $d(l)<\infty$ and $g(l)<\infty$ for all integers $l\ge 2$. A careful examination of the proofs in \sect{S2} may provide the bound $d(l)\le c2^{l/2}$ with an absolute constant $c>0$. So using \pro{P1},  we can also say that $g(l)\le c\sqrt{2^l\cdot l!}$. Using a standard argument one can deduce from the  Bonami-Kiener inequality \e{x1}  the bound
\begin{equation}\label{d3}
	\left\|\sum_{A\in \zZ_l}b_Aw_A\right\|_p\le C_{p,q,l}\left\|\sum_{A\in \zZ_l}b_Aw_A\right\|_q,
\end{equation} 
for any Rademacher chaos sum, where $1\le q<p<\infty$. In contrast to classical Rademacher case (when $l=1$) to the best of our knowledge the optimal constant that can be in \e{d3}, is not known for any combination of the parameters $p> q$. Some estimates of the optimal constant of \e{d3} one can find in papers \cite{Jan, Don, Lar, Ivan}.
\begin{problem}
Find the exact values of $d(l)$ and $g(l)$.
\end{problem}
From \lem{L6} it follows that if $p>q$ and $pl$ and $ql$ are even, then
\begin{equation*}
	\#(\ZD_{l,q}(\text{connected}))\le p\cdot \#(\ZD_{l,p}(\text{connected})).
\end{equation*}
We do not know whether the factor $p$ on the right side can be removed if $l>2$. Moreover, 
\begin{problem}
Let $l>2$. Prove that for any $p>q$ with even $pl$, $ql$ the following bounds hold:
\begin{align*}
	&\#(\ZD_{l,q}(\text{connected}))\le \#(\ZD_{l,p}(\text{connected})),\\
	&\#(\ZD_{l,q}(\text{full}))\le \#(\ZD_{l,p}(\text{full})).
\end{align*}
\end{problem}
\begin{problem}
	Let $l>2$. Prove that whenever $p>q$ and $pl$, $ql$ are even,  $\mu_{l,q}(\text{full})\le \mu_{l,p}(\text{full})$ and $\mu_{l,q}(\text{connected})\le \mu_{l,p}(\text{connected})$.
\end{problem}
As it was mentioned in the introduction $\mu_{l,p}=\#([\ZD_{l,p}(\text{full})])$ is the number of $p$-vertex unlabeled multigraphs with $l$-degree vertexes. One can also check that $\#(\ZD_{l,p}(\text{full}))$ is the number of the same kind of labeled graphs $\#([\ZD_{l,p}(\text{full})])$. Relations \e{x24} and \e{a24} give restricted characterizations of the asymptotic behaviors of these two quantities.   
\begin{problem}
	For $l>2$ prove that the limits 
	\begin{equation*}
		\lim_{p\to\infty}\frac{(\#([\ZD_{l,p}(\text{full})]))^{1/p}}{p^{l/2-1}}\text { and } \lim_{p\to\infty}\frac{(\#(\ZD_{l,p}(\text{full})))^{1/p}}{p^{l-1}}
	\end{equation*}
exist and find those values.
\end{problem}
\begin{bibdiv}
\begin{biblist}
	\bib{And}{book}{
		author={Andrews, George E.},
		title={The theory of partitions},
		series={Cambridge Mathematical Library},
		note={Reprint of the 1976 original},
		publisher={Cambridge University Press, Cambridge},
		date={1998},
		pages={xvi+255},
		isbn={0-521-63766-X},
		review={\MR{1634067}},
	}
\bib{Blei}{book}{
	author={Blei, Ron},
	title={Analysis in integer and fractional dimensions},
	series={Cambridge Studies in Advanced Mathematics},
	volume={71},
	publisher={Cambridge University Press, Cambridge},
	date={2001},
	pages={xx+556},
	isbn={0-521-65084-4},
	review={\MR{1853423}},
	doi={10.1017/CBO9780511543012},
}
\bib{Bon}{article}{
	author={Bonami, Aline},
	title={\'{E}tude des coefficients de Fourier des fonctions de $L^{p}(G)$},
	language={French, with English summary},
	journal={Ann. Inst. Fourier (Grenoble)},
	volume={20},
	date={1970},
	number={fasc. 2},
	pages={335--402 (1971)},
	issn={0373-0956},
	review={\MR{283496}},
}
\bib{Don}{book}{
	author={O'Donnell, Ryan},
	title={Analysis of Boolean functions},
	publisher={Cambridge University Press, New York},
	date={2014},
	pages={xx+423},
	isbn={978-1-107-03832-5},
	review={\MR{3443800}},
	doi={10.1017/CBO9781139814782},
}
\bib{Haa}{article}{
	author={Haagerup, Uffe},
	title={The best constants in the Khintchine inequality},
	journal={Studia Math.},
	volume={70},
	date={1981},
	number={3},
	pages={231--283 (1982)},
	issn={0039-3223},
	review={\MR{654838}},
	doi={10.4064/sm-70-3-231-283},
}
\bib{Ivan}{article}{
	author={Ivanisvili, Paata},
	author={Tkocz, Tomasz},
	title={Comparison of moments of Rademacher chaoses},
	journal={Ark. Mat.},
	volume={57},
	date={2019},
	number={1},
	pages={121--128},
	issn={0004-2080},
	review={\MR{3951277}},
	doi={10.4310/ARKIV.2019.v57.n1.a7},
}
\bib{Jan}{book}{
	author={Janson, Svante},
	title={Gaussian Hilbert spaces},
	series={Cambridge Tracts in Mathematics},
	volume={129},
	publisher={Cambridge University Press, Cambridge},
	date={1997},
	pages={x+340},
	isbn={0-521-56128-0},
	review={\MR{1474726}},
	doi={10.1017/CBO9780511526169},
}
\bib{Khi}{article}{
	author={Khintchine, A.},
	title={\"{U}ber dyadische Br\"{u}che},
	language={German},
	journal={Math. Z.},
	volume={18},
	date={1923},
	number={1},
	pages={109--116},
	issn={0025-5874},
	review={\MR{1544623}},
	doi={10.1007/BF01192399},
}
\bib{Kie}{book}{
	author={Kiener, K.},
	title={\"Uber Produkte von quadratisch integrierbaren Funktionen
		endlicher Vielfalt},
	series={Dissertation},
	publisher={Universit\"at Innsbruck},
	date={1969},
}
\bib{Lar}{article}{
	author={Larsson-Cohn, Lars},
	title={$L^p$-norms of Hermite polynomials and an extremal problem on
		Wiener chaos},
	journal={Ark. Mat.},
	volume={40},
	date={2002},
	number={1},
	pages={133--144},
	issn={0004-2080},
	review={\MR{1948890}},
	doi={10.1007/BF02384506},
}
\bib{Lit}{article}{
	author={Littlewood, J. E.},
	title={On bounded bilinear forms in an infinite number of variables},
	journal={Quart. J. Math. Oxford Ser.},
	volume={1},
	date={1930},
	pages={164--174},
	issn={0033-5606},
	doi={10.1093/qmath/os-1.1.164},
}
\bib{Mul}{book}{
	author={M\"{u}ller, Paul F. X.},
	title={Isomorphisms between $H^1$ spaces},
	series={Instytut Matematyczny Polskiej Akademii Nauk. Monografie
		Matematyczne (New Series) [Mathematics Institute of the Polish Academy of
		Sciences. Mathematical Monographs (New Series)]},
	volume={66},
	publisher={Birkh\"{a}user Verlag, Basel},
	date={2005},
	pages={xiv+453},
	isbn={978-3-7643-2431-5},
	isbn={3-7643-2431-7},
	review={\MR{2157745}},
}
\bib{PaZy}{article}{
		author={Paley, R.E.A.C.	},
	author={Zygmund, A.},
	title={On some series of functions, (1)},
	journal={Mathematical Proceedings of the Cambridge Philosophical Society},
	volume={26},
	number={3}
	date={1930},
	pages={337--357},
	issn={0039-3223},
	doi={10.1017/S0305004100016078},
}
\bib{Ste}{article}{
	author={Ste\v{c}kin, S. B.},
	title={On best lacunary systems of functions},
	language={Russian},
	journal={Izv. Akad. Nauk SSSR Ser. Mat.},
	volume={25},
	date={1961},
	pages={357--366},
	issn={0373-2436},
	review={\MR{0131097}},
}
\bib{Sza}{article}{
	author={Szarek, S. J.},
	title={On the best constants in the Khinchin inequality},
	journal={Studia Math.},
	volume={58},
	date={1976},
	number={2},
	pages={197--208},
	issn={0039-3223},
	review={\MR{430667}},
	doi={10.4064/sm-58-2-197-208},
}

\bib{You}{article}{
	author={Young, R. M. G.},
	title={On the best possible constants in the Khintchine inequality},
	journal={J. London Math. Soc. (2)},
	volume={14},
	date={1976},
	number={3},
	pages={496--504},
	issn={0024-6107},
	review={\MR{438089}},
	doi={10.1112/jlms/s2-14.3.496},
}
\end{biblist}
\end{bibdiv}

\end{document}